\newtheorem{theorem}{Theorem}[section]
\newtheorem{proposition}[theorem]{Proposition}
\newtheorem{definition}[theorem]{Definition}
\newtheorem{lemma}[theorem]{Lemma}
\newtheorem{corollary}[theorem]{Corollary}
\newtheorem{example}[theorem]{Example}
\def\F{\mathbb{F}}
\def\Z{\mathbb{Z}}
\def\P{\mathcal{P}}
\def\C{\mathcal{C}}
\def\H{\mathcal{H}}
\def\D{\mathcal{D}}
\DeclareMathOperator{\dev}{dev}
\DeclareMathOperator{\Aut}{Aut}
\DeclareMathOperator{\Atop}{Atop}
\DeclareMathOperator{\Apar}{Apar}
\begin{document}

\title{On higher-dimensional symmetric designs}

\author[V.~Kr\v{c}adinac and M.~O.~Pav\v{c}evi\'{c}]{Vedran Kr\v{c}adinac$^1$ and Mario Osvin Pav\v{c}evi\'{c}$^2$}

\address{$^1$Faculty of Science, University of Zagreb, Bijeni\v{c}ka cesta~30, HR-10000 Zagreb, Croatia}

\address{$^2$Faculty of Electrical Engineering and Computing, University of Zagreb,
Unska~3, HR-10000 Zagreb, Croatia}

\email{vedran.krcadinac@math.hr}
\email{mario.pavcevic@fer.hr}

\thanks{This work has been supported by the Croatian Science Foundation
under the project $9752$.}

\subjclass{05B05, 05B10, 05B20}

\keywords{higher-dimensional design, symmetric design, difference set}

\date{June 10, 2025}

\begin{abstract}
We study two kinds of generalizations of symmetric block designs
to higher dimensions, the so-called $\C$-cubes and $\P$-cubes.
For small parameters, all examples up to equivalence are determined
by computer calculations. Known properties of automorphisms of
symmetric designs are extended to autotopies of $\P$-cubes, while
counterexamples are found for $\C$-cubes. An algorithm for the
classification of $\P$-cubes with prescribed autotopy groups is
developed and used to construct more examples. A bound on the
dimension of difference sets for $\P$-cubes is proved and
shown to be tight in elementary abelian groups. The construction
is generalized to arbitrary groups by introducing regular sets
of (anti)automorphisms.
\end{abstract}

\maketitle

\section{Introduction}

Two different generalizations of symmetric block designs to higher
dimensions have recently been studied in~\cite{KPT24} and~\cite{KR24}.
A symmetric $(v,k,\lambda)$ design can be represented by its incidence
matrix, i.e.\ by a $v\times v$ matrix~$A$ with $\{0,1\}$-entries satisfying
\begin{equation}\label{eqsbibd}
A A^t = (k-\lambda)I + \lambda J.
\end{equation}
Here, $I$ is the identity matrix and $J$ is the all-ones matrix.
An \emph{$n$-cube of $(v,k,\lambda)$ designs}~\cite{KPT24} is an $n$-dimensional
$v\times \cdots \times v$ matrix such that its every $2$-section is an incidence
matrix of a $(v,k,\lambda)$ design. Sections of dimension~$2$ are submatrices
obtained by fixing all but two coordinates. This generalization is a special
case of W.~de~Launey's proper $n$-dimensional transposable designs; see
\cite[Definitions 2.1, 2.6, 2.7, and Example 2.2]{dL90}. The set of all
$n$-cubes of $(v,k,\lambda)$ designs was denoted by $\C^n(v,k,\lambda)$
in~\cite{KPT24}. We shall refer to them as $\C$-cubes.

The second generalization was introduced in~\cite{KR24} under the name
\emph{$(v,k,\lambda)$ projection $n$-cubes}. These are $n$-dimensional
$v\times \cdots \times v$ matrices with $\{0,1\}$-entries such that
every $2$-projection is an incidence matrix of a $(v,k,\lambda)$ design.
If~$C$ is an $n$-dimensional matrix and $1\le x<y\le n$, the projection
$\Pi_{xy}(C)$ is the $2$-dimensional matrix with $(i_x,i_y)$-entry
\begin{equation*}\label{projsum}
\sum_{1\le i_1,\ldots,i_{x-1},i_{x+1},\ldots,i_{y-1},i_{y+1},\ldots,i_n\le v} C(i_1,\ldots,i_n).
\end{equation*}
The sum is taken over all $n$-tuples $(i_1,\ldots,i_n)\in \{1,\ldots,v\}^n$
with fixed coordinates $i_x$ and $i_y$ in a field of characteristic~$0$. This
definition was inspired by Room squares~\cite{JHD07}, which are generalized
to $n$-dimensional Room cubes in an analogous way. In~\cite{KR24}, the set of
all $(v,k,\lambda)$ projection $n$-cubes was denoted by $\P^n(v,k,\lambda)$.
We shall refer to them as $\P$-cubes. Two examples are shown in Figure~\ref{fig0},
with three light-sources placed along the coordinate axes so that the
projections appear as shadows. These and other pictures in the paper were
rendered using the ray tracing software POV-Ray~\cite{POVRay}.

\begin{figure}[t]
\begin{center}
\includegraphics[width=127mm]{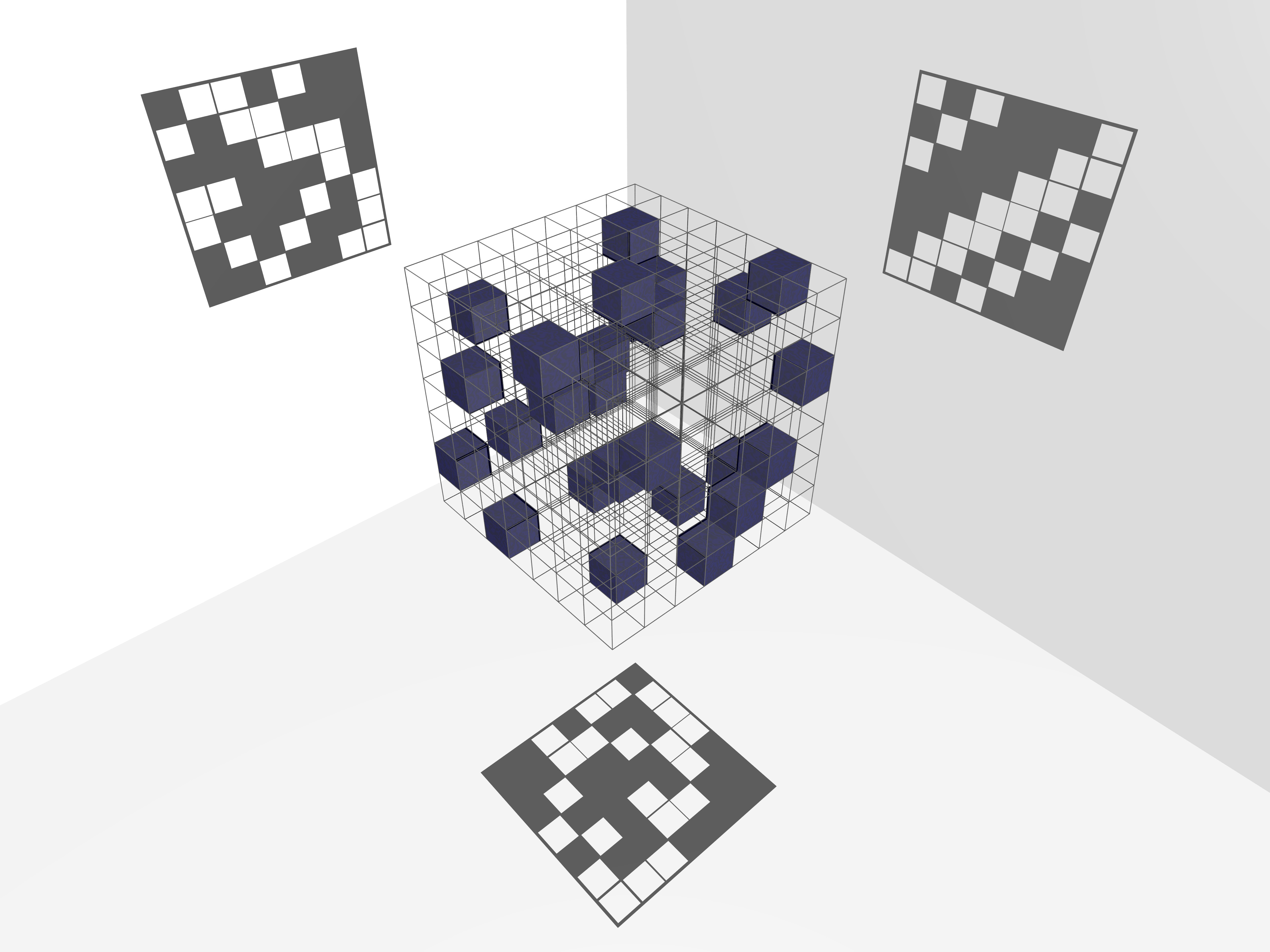}\\[10mm]
\includegraphics[width=127mm]{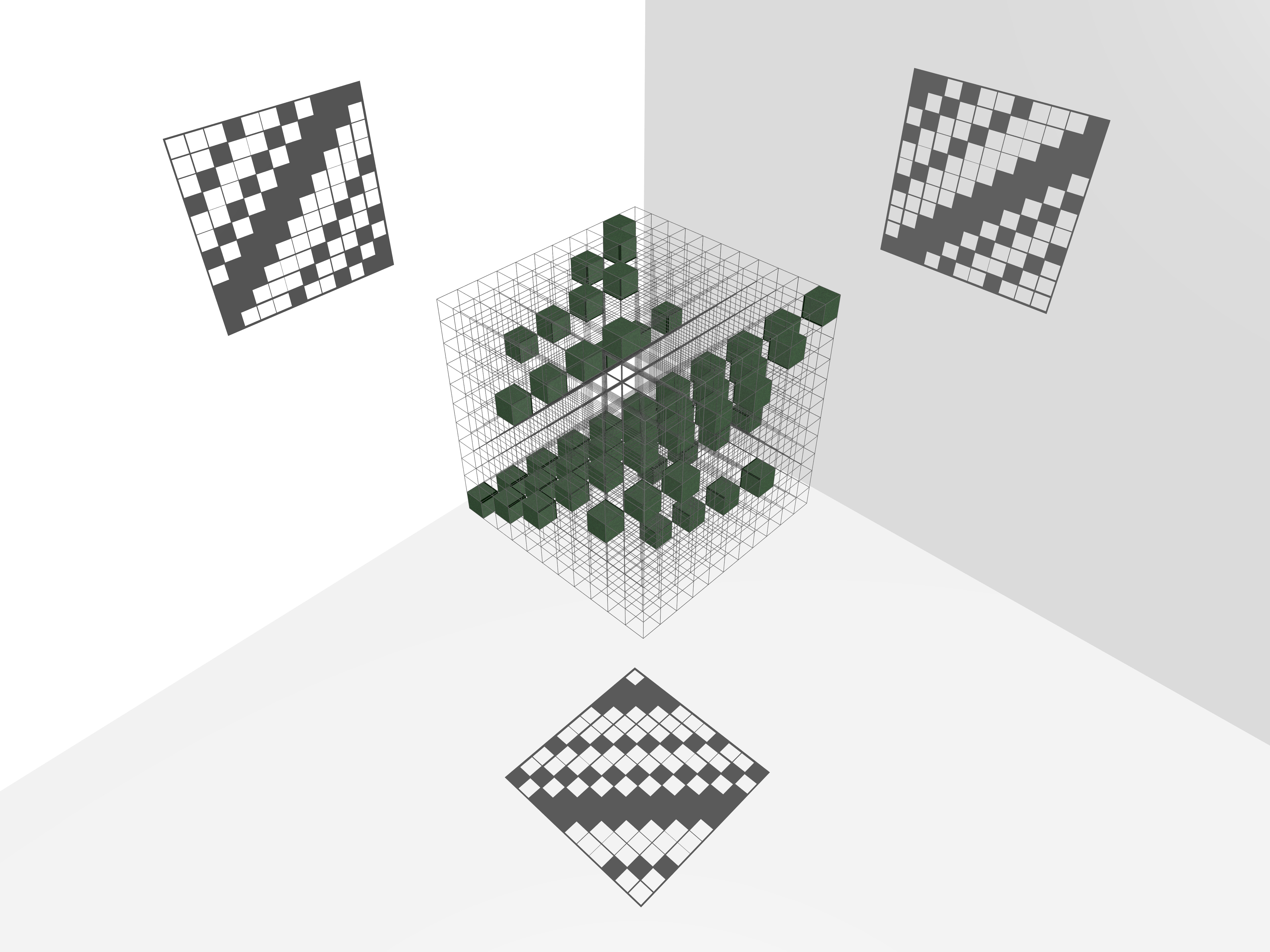}
\end{center}
\caption{Cubes from $\P^3(7,4,2)$ and $\P^3(11,5,2)$.}\label{fig0}
\end{figure}

\afterpage{\clearpage}

The purpose of this paper is to study both types of cubes and to compare their
properties. For dimension $n=2$, both are just incidence matrices of symmetric
designs, but for $n\ge 3$ there are significant differences. For example, it
was shown in~\cite[Theorem 2.9]{KR24} that the dimension of $(v,k,\lambda)$
projection cubes with $k\ge 2$ is bounded by
\begin{equation}\label{dimbound}
n\le \frac{vk-1}{k-1}.
\end{equation}
The largest integer~$n$ such that $\P^n(v,k,\lambda)$-cubes exists
is denoted by $\nu(v,k,\lambda)$. In contrast, the dimension of $\C$-cubes
can be arbitrarily large for fixed parameters $(v,k,\lambda)$.

The organization of our paper is as follows. In Section~\ref{sec2}, we recall the
definitions of isotopy and equivalence of $n$-dimensional incidence cubes.
We discuss the numbers of inequivalent cubes in $\C^n(v,k,\lambda)$ and
$\P^n(v,k,\lambda)$ for $k=1$ and $k=2$. The main result in this section
is Theorem~\ref{class731}, giving a complete enumeration up to equivalence
of $\P$-cubes for parameters $(7,3,1)$, $(7,4,2)$, and all dimensions~$n$.
The proof is a computer calculation based on an algorithm that successively
increases the dimension. The largest possible dimensions are $\nu(7,3,1)=7$
and $\nu(7,4,2)=9$.

Autotopies of $\C$- and $\P$-cubes are studied in Section~\ref{sec3}.
Results about the action of automorphisms of symmetric designs on
the points and blocks carry over to the action of autotopies of
$\P$-cubes on each coordinate. These results do not hold for
autotopies of $\C$-cubes. The main computational result in this
section is Theorem~\ref{v11aut}, giving a complete classification
of $\P^n(11,5,2)$-cubes with nontrivial autotopies. Cubes in
$\P^3(16,6,2)$ with an autotopy of order~$8$ acting semiregularly
are classified in Proposition~\ref{v16aut8}. Among them are examples
with three non-isomorphic $(16,6,2)$ designs as projections,
answering a question posed in~\cite{KR24}.

In Section~\ref{sec4}, we study $\P^n(v,k,\lambda)$-cubes constructed
from higher-dimensional difference sets. Theorem~\ref{dsbound} gives
a sharper bound than~\eqref{dimbound} on the dimension: $n\le v$.
Building on results from~\cite{KR24}, we compute new values of
$\mu_G(v,k,\lambda)$, the largest dimension of $(v,k,\lambda)$ difference
sets in the group~$G$. For elementary abelian groups~$G$, Theorem~\ref{tmelab}
shows that the bound is tight, i.e.\ $\mu_G(v,k,\lambda)=v$ holds
whenever difference sets exist. Theorem~\ref{tmreg}
generalizes the construction to arbitrary groups~$G$ based on the
notion of regular sets of (anti)automorphisms. A nice consequence is
Corollary~\ref{cycdim}, showing that cyclic $(v,k,\lambda)$ difference
sets extend at least to dimension~$p$, where~$p$ is the smallest prime
divisor of~$v$.

In the final Section~\ref{sec5}, we put forward some observations
based on data in Table~\ref{tab4}. The table contains numbers
of inequivalent $\P^n(v,k,\lambda)$-cubes constructed from difference
sets. We have only partial explanations for apparent symmetries of
the numbers and hope that the observations could lead to new theorems.

Some of our results have traditional formal proofs, while others are proved
by computer calculations. There are numerous connections between the two types
of results in both directions. In Section~\ref{sec3}, formal results
about autotopies enable computer classifications of $\P$-cubes with prescribed
autotopy groups. In Section~\ref{sec4}, computer classifications of
higher-dimensional difference sets provide impetus for formal results,
notably Theorems~\ref{tmelab} and~\ref{tmreg}. Higher-dimensional incidence
cubes are most easily explored by studying examples and performing experiments
on a computer. Tools for examining $\C$- and $\P$-cubes of symmetric designs
are available in the GAP~\cite{GAP} package \emph{Prescribed automorphism
groups}~\cite{PAG}. Plenty of examples are given in this paper,
as well as in~\cite{KPT24, KR24}.

\section{Equivalence and classification}\label{sec2}

Formally, incidence matrices of symmetric $(v,k,\lambda)$ designs
are functions $A:\{1,\ldots,v\}\times \{1,\ldots,v\} \to \{0,1\}$
satisfying equation~\eqref{eqsbibd}. An important consequence of
the equation is non-singularity.

\begin{lemma}\label{nonsing}
Incidence matrices of symmetric $(v,k,\lambda)$ designs are invertible.
\end{lemma}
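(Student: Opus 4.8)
The plan is to compute the determinant of $A$ directly from the defining equation~\eqref{eqsbibd}, using the fact that $AA^t$ is a matrix whose eigenvalues can be read off immediately. First I would recall the standard parameter relation $k(k-1) = \lambda(v-1)$, which follows from counting flags in a symmetric design (or, equivalently, from applying \eqref{eqsbibd} to the all-ones vector: each row of $A$ has exactly $k$ ones, so $A$ has row sums $k$, hence $AJ = kJ$, and comparing $AA^tJ = (k-\lambda)J + \lambda v J$ with $A(A^tJ) = A(kJ) = k^2 J$ gives $k^2 = k - \lambda + \lambda v$). I will also assume $0 < \lambda < k < v$ so that the design is nontrivial; the degenerate cases can be noted separately if needed.

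The key step is to diagonalize $M := AA^t = (k-\lambda)I + \lambda J$. Since $J$ has eigenvalue $v$ on the all-ones vector $\mathbf{1}$ and eigenvalue $0$ on its orthogonal complement, $M$ has eigenvalue $(k-\lambda) + \lambda v = k^2$ with multiplicity~$1$, and eigenvalue $k-\lambda$ with multiplicity $v-1$. Therefore
\begin{equation*}
(\det A)^2 = \det(AA^t) = k^2 (k-\lambda)^{v-1}.
\end{equation*}
Because $0 < \lambda < k$, the quantity $k - \lambda$ is a positive integer, so the right-hand side is strictly positive; in particular $\det A \neq 0$, which proves that $A$ is invertible. (As a bonus one gets $\det A = \pm k (k-\lambda)^{(v-1)/2}$, so $v$ must be odd unless $k-\lambda$ is a perfect square — the classical Bruck–Ryser–Chowla parity condition — but this is not needed for the lemma.)

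I do not expect a genuine obstacle here: the only thing to be careful about is justifying that $k - \lambda > 0$, i.e.\ ruling out the trivial design with $k = \lambda$ (equivalently $\lambda = k$, $v = k$, $A = J$, which is singular). Since the paper works throughout with genuine symmetric designs where $\lambda < k$, I would either fold this hypothesis into the statement or remark on it in one line. The argument is otherwise self-contained and uses only the spectral decomposition of $J$.
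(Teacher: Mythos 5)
The paper does not prove this lemma itself---it defers to the cited monographs---and the determinant computation you give, $(\det A)^2=\det\bigl((k-\lambda)I+\lambda J\bigr)=\bigl(k-\lambda+\lambda v\bigr)(k-\lambda)^{v-1}>0$ for $\lambda<k$, is exactly the classical argument found there, so your proof is correct and matches the intended one. The only nitpick is the parenthetical derivation of $k^2=k-\lambda+\lambda v$: the step $A(A^tJ)=A(kJ)$ uses constant \emph{column} sums, which is not what $AJ=kJ$ gives; but this is harmless since the flag-counting argument you mention supplies the relation, and in fact invertibility already follows from $k-\lambda+\lambda v>0$ without identifying that eigenvalue as $k^2$.
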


The lemma implies that the transposed matrix~$A^t$ is also
an incidence matrix of a $(v,k,\lambda)$ design, called the \emph{dual
design}. We refer to the monographs~\cite{IS06, EL83} and the
book~\cite{BJL99} for proofs. Symmetric designs with incidence
matrices~$A$ and~$A'$ are \emph{isomorphic} if there are permutations
$\pi_1, \pi_2\in S_v$ such that $A'(i,j)=A(\pi_1^{-1}(i),\pi_2^{-1}(j))$,
$\forall i,j\in \{1,\ldots,v\}$. This can be written in matrix form as
$A'=P_1AP_2^t$, where $P_1$ and $P_2$ are permutation matrices corresponding
to~$\pi_1$ and~$\pi_2$. We call~$A$ and~$A'$ \emph{equivalent} if~$A'$ is
isomorphic to $A$ or $A^t$. The equivalence classes are orbits of the wreath
product $S_v\wr S_2$ acting on the set of all incidence matrices and
represent symmetric designs up to isomorphism and duality.

Both $\C$- and $\P$-cubes are defined as $n$-dimensional incidence matrices
of order~$v$, i.e.\ as functions $C:\{1,\ldots,v\}^n \to \{0,1\}$ with the
Cartesian $n$-ary power of $\{1,\ldots,v\}$ as domain. Isomorphism of
$n$-dimensional matrices is called \emph{isotopy}: $C$ and $C'$ are isotopic
if there are permutations $\pi_1,\dots,\pi_n\in S_v$ such that
$C'(i_1,\ldots,i_n)=C(\pi_1^{-1}(i_1),\ldots,\pi_n^{-1}(i_n))$,
$\forall i_1,\ldots,i_n\in \{1,\ldots,v\}$. Now the order of the coordinates
can be permuted by any $\gamma \in S_n$. This is called \emph{conjugation}:
$$C^\gamma (i_1,\ldots,i_n) = C(i_{\gamma^{-1}(1)},\ldots,i_{\gamma^{-1}(n)}),\kern 2mm
\forall i_1,\ldots,i_n \in \{1,\ldots,v\}.$$
The cubes $C$ and $C'$ are \emph{equivalent} or \emph{paratopic} if $C'$ is
isotopic to a conjugate $C^\gamma$. The equivalence classes are orbits of the
wreath product $S_v\wr S_n$ acting on $\C^n(v,k,\lambda)$ or $\P^n(v,k,\lambda)$.
The terminology is borrowed from latin squares~\cite{KD15}, where equivalence classes
of paratopy are usually called \emph{main classes}.

The classification problem is to determine the equivalence classes of
$\C$- and $\P$-cubes for given parameters $(v,k,\lambda)$ and dimension~$n$.
We start with the degenerate case $k=1$. Incidence matrices of symmetric
$(v,1,0)$ designs are permutation matrices of order~$v$. They are all
equivalent, since the rows and columns can be permuted to get the identity
matrix~$I$. This extends straightforwardly to $\P^n(v,1,0)$-cubes, which
are all equivalent to
$$C(i_1,\ldots,i_n)=\left\{\begin{array}{ll} 1, & \mbox{ if } i_1=\ldots=i_n,\\[1mm]
0, & \mbox{otherwise.}\\ \end{array}\right.$$

On the other hand, the classification of $\C^n(v,1,0)$-cubes is a difficult problem.
For $n=3$, they are in $1$-to-$1$ correspondence with latin squares $L=(\ell_{i_1i_2})$
of order~$v$ by $C(i_1,i_2,i_3)=[\ell_{i_1 i_2}=i_3]$. The square bracket $[P]$ is
the \emph{Iverson symbol}, taking the value~$1$ if $P$ is true, and~$0$
otherwise~\cite{DK92}. In Figure~\ref{fig0b}, representatives of the two main
classes of latin squares of order~$4$ and the corresponding $\C^3(4,1,0)$-cubes
are shown. The number of main classes of latin squares has been
determined by computer calculations up to $v=11$~\cite{HKO11, MMM07}.

\begin{figure}[t]
\begin{center}
\includegraphics[width=100mm]{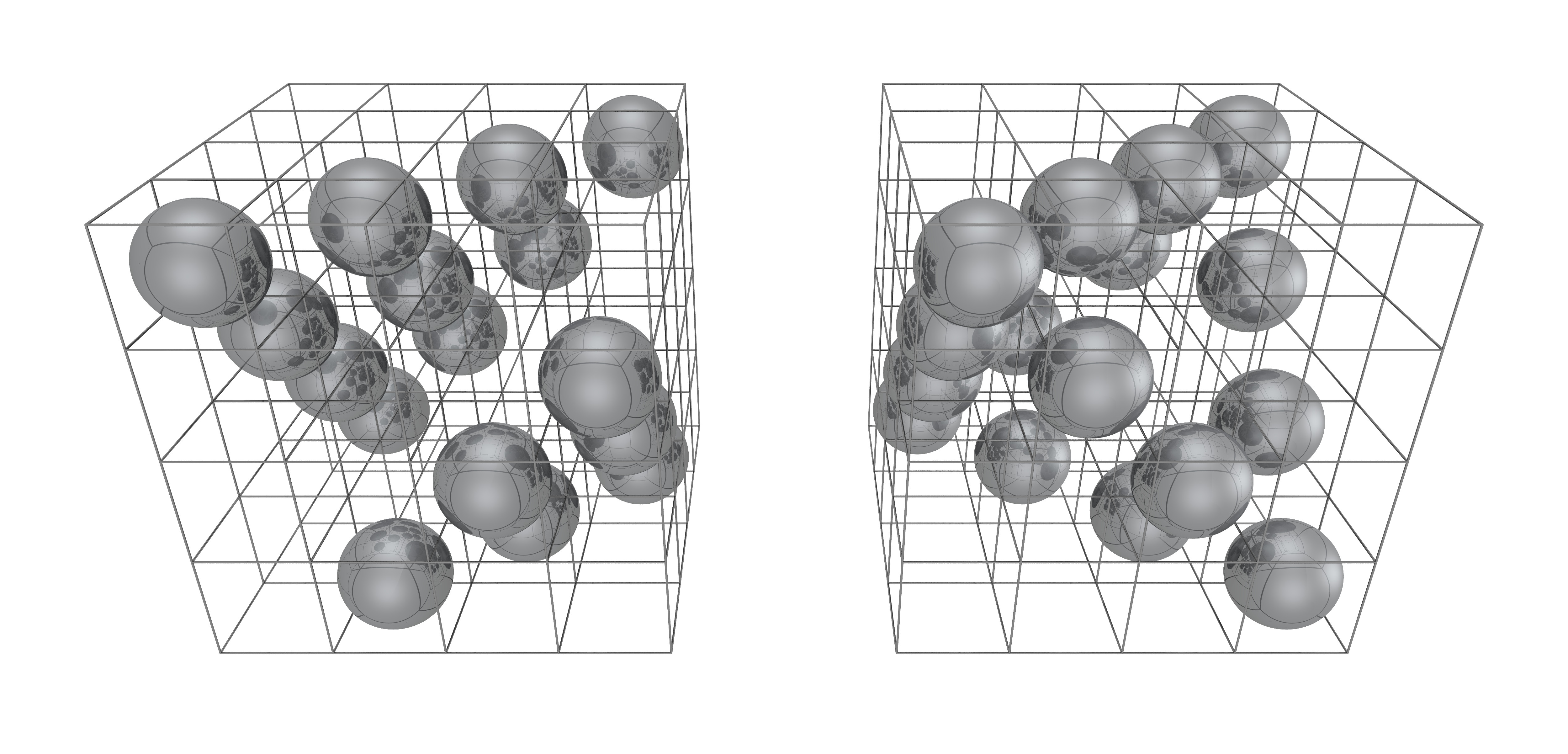}
\end{center}
$$\left(
\begin{array}{llll}
 1 & 2 & 3 & 4 \\
 2 & 3 & 4 & 1 \\
 3 & 4 & 1 & 2 \\
 4 & 1 & 2 & 3
\end{array}
\right)\kern 12mm
\left(
\begin{array}{llll}
 1 & 2 & 3 & 4 \\
 2 & 1 & 4 & 3 \\
 3 & 4 & 1 & 2 \\
 4 & 3 & 2 & 1
\end{array}
\right)$$
\caption{Two $\C^3(4,1,0)$-cubes and the corresponding latin squares of order~$4$.}\label{fig0b}
\end{figure}

Cubes in $\C^n(v,1,0)$ of arbitrary dimension~$n$ are in $1$-to-$1$ correspondence with
latin hypercubes of order~$v$ and dimension~$n-1$. There have been several definitions
of latin hypercubes in the literature; the suitable definition for our purpose is
used in~\cite{MW08}. A \emph{latin hypercube} of order~$v$ and dimension~$n$ is a
$v\times\cdots\times v$ matrix with entries from $\{1,\ldots,v\}$, such that
every $1$-section contains every symbol exactly once. The set of all such objects
is denoted by $\H^n_v$. In~\cite{MW08}, classification is performed for $n=3$, $v\le 6$
and $n=4,5$, $v\le 5$.

For $k=2$, the only feasible parameters of symmetric designs are $(3,2,1)$.
The number of $\C^n(3,2,1)$ cubes can be determined by complementation, i.e.\
by exchanging $0\leftrightarrow 1$. This is a bijection between $\C^n(v,k,\lambda)$
and $\C^n(v,v-k,v-2k+\lambda)$.

\begin{proposition}
The total number of $\C^n(3,2,1)$-cubes is $3\cdot 2^{n-1}$ and they
are all isotopic.
\end{proposition}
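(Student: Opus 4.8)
The plan is to reduce everything to latin hypercubes of order~$3$ and then classify those. Complementation ($0\leftrightarrow 1$) is a bijection $\C^n(3,2,1)\to\C^n(3,1,0)$ (here $v-k=1$ and $v-2k+\lambda=0$), and it plainly commutes with isotopy, so it suffices to count $\C^n(3,1,0)$-cubes and show they are all isotopic. By the correspondence recalled above, $\C^n(3,1,0)$ is in bijection with $\H^{n-1}_3$: a cube $D$ corresponds to the function $L$ sending $(i_1,\ldots,i_{n-1})$ to the unique $i_n$ with $D(i_1,\ldots,i_n)=1$. One checks that this carries isotopy of cubes to the natural action of $S_3^{n-1}\times S_3$ on $\H^{n-1}_3$ permuting the $n-1$ inputs and the output. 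Hence it is enough to prove $|\H^{d}_3|=3\cdot 2^{d}$ for all $d\ge 1$, and that any two elements of $\H^{d}_3$ differ by permuting inputs and output.

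Both follow from the structural claim that, identifying $\{1,2,3\}$ with the field $\F_3$, every $L\in\H^{d}_3$ is an affine function $L(i_1,\ldots,i_d)=a_1i_1+\cdots+a_di_d+b$ with all $a_j\neq 0$. I would prove this by induction on~$d$. For $d=1$ it is the classical fact that every permutation of $\F_3$ is affine, since $S_3\cong\mathrm{AGL}(1,3)$. For the step, fix $L\in\H^{d}_3$ and let $L_0,L_1,L_2\in\H^{d-1}_3$ be the slices obtained by setting the last coordinate to $0,1,2$; by induction $L_t=\sum_j a_j^{(t)}i_j+b^{(t)}$ with each $a_j^{(t)}\neq 0$. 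Since $(L_0,L_1,L_2)$ takes, pointwise, all three values of $\F_3$, whose sum is~$0$, we get the polynomial identity $L_0+L_1+L_2\equiv 0$; comparing coefficients gives $a_j^{(0)}+a_j^{(1)}+a_j^{(2)}=0$ for each~$j$ and $b^{(0)}+b^{(1)}+b^{(2)}=0$. As each $a_j^{(t)}\in\{1,2\}$ and the only triples of nonzero residues summing to~$0$ mod~$3$ are $1+1+1$ and $2+2+2$, the three slices share one linear part $\sum_j a_ji_j$; then $t\mapsto b^{(t)}$ is a permutation of $\F_3$, hence affine, $b^{(t)}=a_dt+b$ with $a_d\neq 0$, and assembling gives the claimed form. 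Conversely every such affine map is a latin hypercube, so $|\H^{d}_3|$ equals the number $2^d\cdot 3$ of choices of $(a_1,\ldots,a_d)\in(\F_3^\times)^d$ and $b\in\F_3$.

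For the isotopy assertion, the affine hypercube $\sum_j a_ji_j+b$ is obtained from the single hypercube $(i_1,\ldots,i_d)\mapsto i_1+\cdots+i_d$ by applying on the $j$-th input the permutation $x\mapsto a_j^{-1}x$ and on the output the permutation $x\mapsto x+b$. Hence all of $\H^{d}_3$ is isotopic to this one hypercube, so all $\C^n(3,2,1)$-cubes are isotopic, which together with the count $3\cdot 2^{n-1}$ (taking $d=n-1$) proves the proposition.

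The only real work is the structural claim, and within it the rigidity $a_j^{(0)}=a_j^{(1)}=a_j^{(2)}$; this is special to $v=3$ (there are non-affine latin squares already for $v=4$) and relies on $\F_3^\times=\{1,2\}$ with $1+1+1\equiv 2+2+2\equiv 0$. The rest --- the complementation bijection, the cube/latin-hypercube dictionary and its compatibility with isotopy, and the explicit isotopies --- is routine; the one bookkeeping point I would make sure to verify is that for a $\{0,1\}$-cube ``every $2$-section is a permutation matrix'' is equivalent to ``every $1$-section contains exactly one~$1$'', which is what validates the dictionary, and to note that the smallest case $n=2$ (where $\H^1_3$ is just the six permutations of a $3$-set) fits in consistently.
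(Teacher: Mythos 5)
Your proposal follows the same route as the paper: complement to reduce to $\C^n(3,1,0)$, pass to latin hypercubes of order~$3$ and dimension $n-1$, and use the fact that all such hypercubes are linear (affine) to obtain both the count $3\cdot 2^{n-1}$ and the single isotopy class. The only difference is that the paper cites this linearity fact from McKay--Wanless and Soedarmadji, whereas you prove it yourself by a correct induction on the dimension (the rigidity step using that the only triples of nonzero residues mod~$3$ summing to~$0$ are $1+1+1$ and $2+2+2$ is sound), so your argument is a valid, self-contained version of the paper's proof.
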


\begin{proof}
By complementation, it suffices to count cubes in $\C^n(3,1,0)$.
They are in $1$-to-$1$ correspondence with latin hypercubes of
order $v=3$ and dimension $n-1$. The number of such hypercubes
is known to be $|\H_3^{n-1}|=3\cdot 2^{n-1}$, see~\cite[page 726]{MW08}
or~\cite[Theorem~1]{ES06}. The proof in~\cite{MW08} relies on the
fact that all latin hypercubes of order $v\le 3$ are linear. Linear
hypercubes are easy to count and they are all isotopic. Therefore,
the cubes in $\C^n(3,2,1)$ are also all isotopic.
\end{proof}

\begin{proposition}\label{nocompl}
The complement of a $\P$-cube of dimension $n\ge 3$ is not a $\P$-cube.
\end{proposition}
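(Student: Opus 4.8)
The plan is to examine what the $\P$-cube condition says about a single entry and then count. Suppose $C \in \P^n(v,k,\lambda)$ with $n \ge 3$, and let $\bar C$ be its complement, i.e.\ $\bar C(i_1,\ldots,i_n) = 1 - C(i_1,\ldots,i_n)$. The key observation is that a $2$-projection $\Pi_{xy}(\bar C)$ has $(i_x,i_y)$-entry equal to $v^{n-2}$ minus the corresponding entry of $\Pi_{xy}(C)$, because there are exactly $v^{n-2}$ terms in the defining sum. Since $\Pi_{xy}(C)$ is a $\{0,1\}$-incidence matrix of a $(v,k,\lambda)$ design, the entries of $\Pi_{xy}(\bar C)$ lie in $\{v^{n-2}-1, v^{n-2}\}$. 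For $\bar C$ to be a $\P$-cube, these must be $\{0,1\}$, which forces $v^{n-2} = 1$, impossible for $n \ge 3$ since $v \ge 3$ (the only feasible $k=2$ case already has $v=3$, and in general $v > k \ge 2$). Hence $\bar C$ cannot be a $\P$-cube.

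Actually, a cleaner way to see it: I would argue directly that $\bar C$ fails even to be a $\{0,1\}$-matrix's projection of the right type by exhibiting that $\Pi_{xy}(\bar C)$ is not a $\{0,1\}$-matrix at all. Fix any $x<y$. The projection $\Pi_{xy}(C)$ sums $v^{n-2}$ binary values and equals an incidence matrix, so every such sum is $0$ or $1$; consequently for each pair $(i_x,i_y)$ at most one of the $v^{n-2}$ summands $C(i_1,\ldots,i_n)$ equals $1$. Then the complementary sum $\Pi_{xy}(\bar C)(i_x,i_y) = v^{n-2} - \Pi_{xy}(C)(i_x,i_y) \ge v^{n-2}-1 \ge 2$ whenever $n \ge 3$ and $v \ge 2$. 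A matrix with an entry $\ge 2$ cannot be the incidence matrix of any $(v,k,\lambda)$ design, so $\bar C \notin \P^n(v,k,\lambda)$ for any parameters.

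There is essentially no obstacle here; the only thing to be careful about is bookkeeping the number of summands in the projection (it is $v^{n-2}$, not $v^{n-1}$, since two coordinates are fixed) and noting that $n \ge 3$ is exactly what makes $v^{n-2} \ge v \ge 2$. It is worth remarking in the statement's proof that this is the structural reason why $\P$-cubes, unlike $\C$-cubes, are not closed under complementation, and therefore the complementation bijection used for $\C^n(3,2,1)$ has no analogue for $\P$-cubes.
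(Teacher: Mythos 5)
Your argument is correct, but it takes a different route from the paper. The paper's proof is a single global count: by \cite[Proposition~2.3]{KR24} a $\P^n(v,k,\lambda)$-cube has exactly $vk$ one-entries independently of $n$, so its complement has $v^n-vk=v(v^{n-1}-k)$ ones, which exceeds $vk'$ for every feasible $k'\le v$ once $n\ge 3$. You instead argue locally, entry by entry: each entry of $\Pi_{xy}(\overline{C})$ equals $v^{n-2}$ minus the corresponding entry of $\Pi_{xy}(C)$, hence lies in $\{v^{n-2}-1,\,v^{n-2}\}$, and such a matrix cannot be a $\{0,1\}$ incidence matrix unless $v^{n-2}=1$. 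Your version is slightly more self-contained (it needs nothing beyond the definition of a projection, whereas the paper leans on the incidence count from~\cite{KR24}), and it pinpoints the structural reason more sharply: the complement's projections are not even $\{0,1\}$-matrices. The two arguments are of course closely related, since summing all entries of a projection recovers the global incidence count. One small slip in your second paragraph: the inequality $v^{n-2}-1\ge 2$ fails for $n=3$, $v=2$; but this degenerate case is harmless (your first paragraph's formulation, which only needs $v^{n-2}\ne 1$, already disposes of it, and no nondegenerate symmetric design has $v=2$). Your closing remark correctly identifies why the complementation bijection available for $\C$-cubes has no $\P$-cube analogue, which is exactly the role this proposition plays in the paper.
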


\begin{proof}
By~\cite[Proposition~2.3]{KR24}, $\P^n(v,k,\lambda)$-cubes have $vk$
incidences ($1$-entries) regardless of the dimension~$n$. The number
of incidences in the complement is $v^n-vk=v(v^{n-1}-k)$. This is too
large for any $\P^n(v,k',\lambda')$-cube if $n>2$.
\end{proof}

Cubes in $\P^n(3,2,1)$ have been classified directly in~\cite{KR24}.
The result is reproduced in the first row of Table~\ref{tab1}. We
see that $\nu(3,2,1)=5$ and the bound~\eqref{dimbound} is tight in
this case. The next feasible parameters are $(7,3,1)$. It is well
known that the Fano plane is the unique $(7,3,1)$ design. In~\cite{KR24},
two inequivalent $\P^3(7,3,1)$-cubes are presented as Examples~2.2 and~2.6.
Here we present a full classification of $\P^n(7,3,1)$ and
$\P^n(7,4,2)$-cubes.

\begin{theorem}\label{class731}
The numbers of cubes in $\P^n(7,3,1)$ and $\P^n(7,4,2)$ up to equivalence
are given in Table~\textup{\ref{tab1}}. In particular, $\nu(7,3,1)=7$ and
$\nu(7,4,2)=9$.
\end{theorem}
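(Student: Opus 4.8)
The plan is to prove Theorem~\ref{class731} by an exhaustive but carefully organized computer search that builds cubes dimension by dimension, exploiting the rigidity of the Fano plane and its dual. First I would recall that $\P^2(7,3,1)$ and $\P^2(7,4,2)$ each contain, up to equivalence, a single design --- the Fano plane and its complement --- so the search has a unique starting point. The key observation making the induction work is that every $2$-projection $\Pi_{xy}(C)$ of a $\P^n$-cube is an incidence matrix of the fixed design, and restricting attention to the first two coordinates together with a fixed value of the $n$-th coordinate decomposes $C$ into ``slices'' whose superposition (summation over the last coordinate) must reproduce that incidence matrix. Concretely, a cube in $\P^{n}(v,k,\lambda)$ is obtained from one in $\P^{n-1}(v,k,\lambda)$ by adding one new axis: one chooses, for each value $i_n\in\{1,\dots,v\}$, a $0/1$ array indexed by the first $n-1$ coordinates, subject to the constraint that \emph{all} $2$-projections involving the new coordinate, and all those not involving it, are incidence matrices of the design. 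This is a constraint-satisfaction / exact-cover type problem that can be solved by backtracking.

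The key steps, in order, are as follows. (1)~Set up the lift operation: given a representative $C\in\P^{n-1}(v,k,\lambda)$, enumerate all $C'\in\P^{n}(v,k,\lambda)$ whose ``projection onto the first $n-1$ coordinates'' (summing out $i_n$) equals $C$; since by \cite[Proposition~2.3]{KR24} the total number of incidences is $vk$ independent of dimension, each such $C'$ distributes the $vk$ ones of $C$ among the $v$ new layers, and the projection conditions in the new coordinate force each layer to interact correctly with every old coordinate. (2)~Run the backtracking solver for $n=3$ starting from the Fano plane (respectively its complement), collecting all solutions. (3)~Reduce the solution set modulo the paratopy group $S_v\wr S_n$ (here $S_7\wr S_n$), using a canonical-form computation or orbit enumeration --- the tools in the GAP package \emph{PAG}~\cite{PAG} handle exactly this. (4)~Iterate: feed each inequivalent $\P^{n-1}$-representative into the lift, union the results, reduce modulo $S_7\wr S_n$ again, and record the count; when the lift produces no solutions, the previous $n$ is $\nu$. (5)~Report that the process terminates at $n=7$ for $(7,3,1)$ and at $n=9$ for $(7,4,2)$, yielding the numbers in Table~\ref{tab1}, and note that both terminal values respect the bound~\eqref{dimbound}, which gives $n\le\frac{20}{2}=10$ for $(7,3,1)$ and $n\le\frac{27}{3}=9$ for $(7,4,2)$ (so in the latter case the bound is met exactly).

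One subtlety worth addressing explicitly is the correctness of the dimension-increasing strategy: a priori, lifting only those $\P^n$-cubes whose ``first-$(n-1)$-coordinate projection'' is a \emph{given} representative could miss cubes, but since every $\P^n$-cube restricts (via that projection) to \emph{some} $\P^{n-1}$-cube, and since isotopies/conjugations of the $\P^{n-1}$-cube lift to isotopies/conjugations of the $\P^n$-cube, running the lift over a complete set of $\P^{n-1}$-representatives and then reducing does recover a complete set of $\P^n$-representatives. A second point is the conjugation action: when $n$ increases, new conjugations (transpositions moving the new axis) must be included in the reduction, so the orbit computation at each stage is with respect to the full $S_7\wr S_n$, not merely the stabilizer of the old coordinates.

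The main obstacle will be the combinatorial explosion in the lifting step: although the per-layer incidence count is constrained, the number of ways to distribute the ones of a $\P^{n-1}$-cube into $v$ layers grows rapidly, and the number of inequivalent $\P^{n-1}$-cubes itself grows (see Table~\ref{tab1}), so both the branching in the backtracking solver and the size of the solution set before reduction become large for intermediate $n$. Managing this requires good propagation of the projection constraints during backtracking (pruning partial layer-assignments as soon as any $2$-projection's row or column sum exceeds~$k$ or cannot reach~$k$), and efficient isomorph rejection --- ideally performing partial canonicalization during the search rather than only at the end. Once the search reaches dimensions close to $\nu$ the solution sets shrink again and termination is clear; the genuine computational cost is concentrated in the middle range of~$n$.
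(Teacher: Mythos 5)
Your proposal is correct and follows essentially the same route as the paper: a dimension-increasing search starting from the unique $(7,3,1)$ design and its complement, exploiting the fact that a $\P$-cube has exactly $vk$ incidences in every dimension (equivalently, the $OA(vk,n,v,1)$ representation the paper uses), with isomorph rejection under $S_7\wr S_n$ at each stage and the same justification for completeness of the lift. The paper's only additional specific device is to assign the new coordinate one symbol value at a time (in $\binom{vk}{k}$ ways per symbol) with intermediate equivalence reduction between symbols, which is exactly the ``partial canonicalization during the search'' you identify as necessary to tame the combinatorial explosion.
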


\begin{table}[!h]
\begin{tabular}{|c|ccccccccc|}
\hline
 & \multicolumn{9}{c|}{$n$} \\[1mm]
$(v,k,\lambda)$ & 2 & 3 & 4 & 5 & 6 & 7 & 8 & 9 & 10 \\
\hline
\rule{0mm}{5mm}$(3,2,1)$ & 1 & 2 & 1 & 1 & 0 & & & & \\[1mm]
$(7,3,1)$ & 1 & 13 & 20 & 4 & 3 & 2 & 0 & 0 & 0 \\[1mm]
$(7,4,2)$ & 1 & 877 & 884 & 74 & 19 & 9 & 6 & 5 & 0 \\[1mm]
\hline
\end{tabular}
\vskip 2mm
\caption{Numbers of $\P^n(v,k,\lambda)$-cubes up to equivalence.}\label{tab1}
\end{table}

The bound~\eqref{dimbound} gives $\nu(7,3,1)\le 10$ and is not tight for
these parameters, but it is tight for $(7,4,2)$. The proof of Theorem~\ref{class731}
is a computer calculation that was carried out using the
orthogonal array representation of $\P$-cubes. An \emph{orthogonal array}
$OA(N,n,v,t)$ of \emph{size}~$N$, \emph{degree}~$n$, \emph{order}~$v$, and
\emph{strength}~$t$ is an $N$-set of $n$-tuples from $\{1,\ldots,v\}^n$ such
that for any choice of~$t$ coordinates, each $t$-tuple from
$\{1,\ldots,v\}^t$ appears as a restriction of the $n$-tuples to the chosen
coordinates exactly~$\lambda$ times. This parameter is called the \emph{index}
of the orthogonal array and is given by $\lambda=N/v^t$.

A function $C:\{1,\ldots,v\}^n\to \{0,1\}$ is the characteristic function
of a set of $n$-tuples
$$\overline{C}=\{(i_1,\ldots,i_n)\in \{1,\ldots,v\}^n \mid C(i_1,\ldots,i_n)=1\}.$$
If $C\in \P^n(v,k,\lambda)$, then $\overline{C}$ is an orthogonal
array $OA(vk,n,v,1)$ of index~$k$ \cite[Corollary~2.5]{KR24}.
This means that each element from $\{1,\ldots,v\}$ appears
exactly~$k$ times in every coordinate. This property is not sufficient;
a characterization of $OA(vk,n,v,1)$'s representing $\P^n(v,k,\lambda)$-cubes
is given in \cite[Proposition~2.4]{KR24}. In the proof of
\cite[Theorem~2.9]{KR24} it was also shown that $\overline{C}$ is
a distance-invariant non-linear $v$-ary code of length~$n$, in which
only the distances $\{n-1, n\}$ occur.

The OA-representation of $\P$-cubes is convenient
for classification because the size $N=vk$ remains constant
across all dimensions~$n$. We can take the $21$ incident pairs
of the Fano plane and extend them to triples representing
$\P^3(7,3,1)$-cubes, and continue increasing the dimension
by~$1$ in this way. If the conditions from \cite[Proposition~2.4]{KR24}
are not taken into account, the number of extensions of an
$OA(vk,n,v,1)$ is $(vk)!/(k!)^v$. For parameters $(7,3,1)$ this
is already too large for an exhaustive computer search, even if we do
check the conditions for partial extensions using a backtracking algorithm.
We amend this by performing isomorph rejection. As an intermediate step,
we add the entry~$1$ to the new coordinate in ${vk\choose k}$ ways and
check the necessary and sufficient conditions. We then eliminate
equivalent copies among the valid partial extensions, and proceed
by adding the next entry~$2$ in the same way. When we reach the
last entry~$v$, we have all OA-representations of
$\P^{n+1}(v,k,\lambda)$-cubes up to equivalence.

We performed this calculation for parameters $(7,3,1)$ and $(7,4,2)$,
increasing the dimension until further extension is not possible.
We relied on canonical labelings produced by nauty and Traces~\cite{MP14}
to eliminate equivalent (partial) OA-representations of cubes.
Online versions of Table~\ref{tab1} and other tables in this paper are
available on the web page
\begin{center}
\url{https://web.math.pmf.unizg.hr/~krcko/results/pcubes.html}
\end{center}
The online tables contain links to files with OA-representations of
the corresponding cubes that can be used to verify our calculations.
The files are given in GAP~\cite{GAP} format. The GAP package \emph{Prescribed
automorphism groups}~\cite{PAG} contains functions for working with $\P$-
and $\C$-cubes; see the package manual.

We were not able to perform a full classification for the next parameters
$(11,5,2)$, but this might be within reach of today's computer technology.
Instead, we classify $\P^n(11,5,2)$-cubes with nontrivial autotopy groups
in Theorem~\ref{v11aut}. Computer classification of $\C$-cubes is a much more
difficult problem. $\C^n(v,k,\lambda)$-cubes can also be represented as
orthogonal arrays $OA(kv^{n-1},n,v,n-1)$ of index $k$, see~\cite[Section~2]{KPT24}.
However, the size $N=kv^{n-1}$ grows exponentially with the dimension~$n$,
so another approach is needed. We have not found a feasible classification
strategy even for $\C^3(7,3,1)$.

\section{Autotopies}\label{sec3}

Isotopy from an incidence cube to itself is called \emph{autotopy}. The
set of all autotopies of~$C$ forms a group with coordinatewise composition,
called the \emph{full autotopy group} and denoted by $\Atop(C)$. This is a
generalization of the full automorphism group of a symmetric design.
Automorphisms of symmetric designs are usually defined as single
permutations of points, because the corresponding permutations of blocks
are uniquely determined. This is a consequence of Lemma~\ref{nonsing}:
if $A=P_1AP_2^t$ holds for permutation matrices~$P_1$ and~$P_2$,
then $P_2=A^{-1}P_1A$. The next two propositions describe how this
carries over to higher-dimensional $\P$- and $\C$-cubes.

\begin{proposition}
Let $(\pi_1,\ldots,\pi_n)$ be an autotopy of $C\in \P^n(v,k,\lambda)$.
Then any component $\pi_x$ uniquely determines all other components.
\end{proposition}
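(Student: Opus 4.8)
The plan is to mimic the classical argument for symmetric designs recalled just above: for a genuine symmetric design, $A = P_1 A P_2^t$ forces $P_2 = A^{-1} P_1 A$ because $A$ is invertible (Lemma~\ref{nonsing}). We want to run the same idea one coordinate at a time for a $\P$-cube. The key observation is that the defining property of a $\P$-cube involves the $2$-projections $\Pi_{xy}(C)$, and these are exactly the objects to which the invertibility lemma applies. So I would reduce the $n$-dimensional statement to a family of $2$-dimensional statements, one for each pair of coordinates.

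**Key steps.** First I would fix the index~$x$ and show that $\pi_x$ determines $\pi_y$ for every $y \ne x$. Suppose $(\pi_1,\ldots,\pi_n)$ is an autotopy of $C$. The crucial step is to check how an isotopy of $C$ acts on the projection $\Pi_{xy}(C)$: since the projection sums $C$ over all coordinates other than $x$ and $y$, and the permutations in the other coordinates merely reindex the summation, one gets $\Pi_{xy}(C) = P_x \, \Pi_{xy}(C) \, P_y^t$, where $P_x, P_y$ are the permutation matrices of $\pi_x, \pi_y$. (Here I should be a little careful about whether the projection is $\Pi_{xy}$ or $\Pi_{yx}$ and hence whether $P_x$ multiplies on the left or the right, but up to transposing the relation this is harmless.) Now $\Pi_{xy}(C)$ is an incidence matrix of a $(v,k,\lambda)$ design, hence invertible by Lemma~\ref{nonsing}. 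Therefore $P_y = \Pi_{xy}(C)^{-1} P_x \, \Pi_{xy}(C)$, so $\pi_y$ is uniquely determined by $\pi_x$. Doing this for each $y \ne x$ gives the claim, since $\pi_x$ determines itself trivially.

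**Main obstacle.** The one step that needs genuine care — rather than routine computation — is establishing the transformation rule $\Pi_{xy}(C) = P_x \, \Pi_{xy}(C)\, P_y^t$ for the projection under an isotopy. One must expand the definition of $\Pi_{xy}$ applied to $C'(i_1,\ldots,i_n) = C(\pi_1^{-1}(i_1),\ldots,\pi_n^{-1}(i_n))$, substitute $j_r = \pi_r^{-1}(i_r)$ for the summed indices $r \notin \{x,y\}$, observe that this substitution is a bijection on each summation range $\{1,\ldots,v\}$ and hence leaves the sum unchanged, and conclude that the $(i_x,i_y)$-entry of $\Pi_{xy}(C')$ equals the $(\pi_x^{-1}(i_x), \pi_y^{-1}(i_y))$-entry of $\Pi_{xy}(C)$. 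Since $C$ is an autotopy image of itself, $C' = C$, giving the displayed relation. Everything after that is the verbatim symmetric-design argument. I would also remark, as the proposition following this one presumably does for $\C$-cubes, that the analogous statement fails there because $2$-sections (not projections) are the relevant objects and fixing one coordinate does not pin down a single section to invert.
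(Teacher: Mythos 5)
Your proposal is correct and follows essentially the same route as the paper: the paper's proof simply observes that $(\pi_x,\pi_y)$ is an automorphism of the symmetric design $\Pi_{xy}(C)$, so $\pi_y$ is determined by $\pi_x$ via the invertibility of the incidence matrix (Lemma~\ref{nonsing}). You merely spell out the reindexing argument showing that the projection is preserved, which the paper leaves implicit.
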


\begin{proof}
For any other component $\pi_y$, the pair $(\pi_x,\pi_y)$ is an
automorphism of the symmetric design $\Pi_{xy}(C)$. Therefore,
$\pi_y$ is uniquely determined by~$\pi_x$ and $C$.
\end{proof}

\begin{proposition}
Let $(\pi_1,\ldots,\pi_n)$ be an autotopy of $C\in \C^n(v,k,\lambda)$.
Then any component $\pi_x$ is uniquely determined by the $n-1$ other
components.
\end{proposition}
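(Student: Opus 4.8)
The plan is to mimic the argument already used for symmetric designs, where $A = P_1 A P_2^t$ together with invertibility of $A$ forces $P_2 = A^{-1} P_1 A$, but to apply it to a single $2$-section of the cube rather than to the whole cube. The one genuinely new feature is that an autotopy need not fix a prescribed $2$-section: it permutes the $2$-sections of $C$ among themselves, and the image section is controlled precisely by the components $\pi_z$ with $z\ne x$.

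First I would fix an index $y\ne x$ and arbitrary values $c_z\in\{1,\ldots,v\}$ for all $z\notin\{x,y\}$. Let $A$ be the $2$-section of $C$ obtained by freezing coordinate $z$ at $c_z$ for each $z\notin\{x,y\}$ while letting coordinates $x$ and $y$ vary. By the defining property of a $\C$-cube, $A$ is an incidence matrix of a $(v,k,\lambda)$ design, hence invertible by Lemma~\ref{nonsing}.

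Next I would substitute the autotopy identity $C(\pi_1(i_1),\ldots,\pi_n(i_n))=C(i_1,\ldots,i_n)$ into this section. A short index computation shows that $A$, with its rows permuted by $\pi_x$ and its columns by $\pi_y$, equals the $2$-section $A'$ of $C$ obtained by freezing coordinate $z$ at $\pi_z(c_z)$ for $z\notin\{x,y\}$; in matrix form $A' = P_x A P_y^t$, where $P_x,P_y$ are the permutation matrices of $\pi_x,\pi_y$. Since $A$ is invertible and $P_y^t=P_y^{-1}$, this rearranges to
\[
P_x = A' P_y A^{-1}.
\]
Now $A$ depends only on $C$; the matrix $A'$ depends only on $C$ together with the components $\pi_z$ for $z\notin\{x,y\}$; and $P_y$ is the component $\pi_y$. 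Since $\pi_y$ and the $\pi_z$ with $z\notin\{x,y\}$ are all among the $n-1$ components distinct from $\pi_x$, the displayed equation determines $\pi_x$ uniquely, as claimed.

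I expect the only step needing care is the second one: correctly identifying which $2$-section the fixed section is carried to by the autotopy, and placing the transpose in $A' = P_x A P_y^t$ on the correct side. Everything afterwards is the one-line manipulation above together with Lemma~\ref{nonsing}. It is also worth remarking why the cleaner argument for $\P$-cubes (via a single projection $\Pi_{xy}$) does not transfer: an autotopy of a $\C$-cube moves its $2$-sections around rather than preserving one fixed symmetric design, which is exactly why all $n-1$ remaining components --- not just one --- are needed to pin down $\pi_x$.
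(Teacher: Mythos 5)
Your proof is correct and follows essentially the same route as the paper: both compare the $2$-section obtained by freezing the remaining coordinates with its image section under the autotopy, obtain $A'=P_xAP_y^t$, and invoke Lemma~\ref{nonsing} to solve for $P_x$. The only cosmetic difference is that the paper specializes to $x=n$, $y=n-1$ and freezes the other coordinates at $1,\ldots,1$, whereas you keep general indices and a general frozen tuple.
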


\begin{proof}
Suppose we want to determine $\pi_n$ from $\pi_1,\ldots,\pi_{n-1}$
and~$C$. Let~$A'$ and~$A$ be the $2$-sections of~$C$ obtained by fixing the
first $n-2$ coordinates to $1,\ldots,1$ and $\pi_1^{-1}(1),\ldots,\pi_{n-2}^{-1}(1)$,
respectively. According to the definition, they are incidence
matrices of $(v,k,\lambda)$ designs. The pair $(\pi_{n-1},\pi_{n})$
is an isomorphism between~$A$ and~$A'$:
\begin{align*}
A'(i,j) &= C(1,\ldots,1,i,j)= C(\pi_1^{-1}(1),\ldots,\pi_{n-2}^{-1}(1),\pi_{n-1}^{-1}(i),\pi_n^{-1}(j))\\[1mm]
 &= A(\pi_{n-1}^{-1}(i),\pi_n^{-1}(j)), \kern 2mm \forall i,j\in\{1,\ldots,v\}.
\end{align*}
We can write this in matrix form as $A'=P_{n-1}AP_n^t$. Thus, $\pi_n$ is
uniquely determined by $P_n=(A')^{-1}P_{n-1}A$.
\end{proof}

The previous proposition is best possible, in the sense that~$\pi_x$
is not determined by fewer than $n-1$ other components of the autotopy.
By Theorem~3.4 of~\cite{KPT24}, a $\C$-cube of dimension~$n$
constructed from a $(v,k,\lambda)$ difference set in~$G$ has
an autotopy group isomorphic to $G^{n-1}$. In this group there are~$v$
different choices for $\pi_{n-1}$ and $\pi_n$ for any given components
$\pi_1,\ldots,\pi_{n-2}$.

It is known that automorphisms of symmetric $(v,k,\lambda)$ designs
fix as many points as blocks. The analogous statement is true for
$\P$-cubes.

\begin{proposition}\label{fixp}
Let $\pi\in \Atop(C)$ be an autotopy of $C\in \P^n(v,k,\lambda)$.
Then every component $\pi_x$ has the same number of fixed points.
\end{proposition}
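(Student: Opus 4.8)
The plan is to reduce the statement to the classical fact that an automorphism of a symmetric design fixes exactly as many points as blocks, and then to propagate the resulting equality around all pairs of coordinates.

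First I would verify that for any two distinct coordinate positions $x,y\in\{1,\ldots,n\}$, the pair $(\pi_x,\pi_y)$ is an automorphism of the symmetric $(v,k,\lambda)$ design with incidence matrix $\Pi_{xy}(C)$, with $\pi_x$ permuting the rows and $\pi_y$ the columns. This is a direct computation: the autotopy relation $C(i_1,\ldots,i_n)=C(\pi_1^{-1}(i_1),\ldots,\pi_n^{-1}(i_n))$ merely relabels the summation indices defining the projection, so $\Pi_{xy}(C)(a,b)=\Pi_{xy}(C)(\pi_x^{-1}(a),\pi_y^{-1}(b))$ for all $a,b$; in matrix form, $\Pi_{xy}(C)=P_x\,\Pi_{xy}(C)\,P_y^t$, where $P_x,P_y$ are the permutation matrices of $\pi_x,\pi_y$.

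Second I would invoke — or reprove in one line — the standard result on automorphisms of symmetric designs. Writing $A=\Pi_{xy}(C)$, the matrix $A$ is invertible by Lemma~\ref{nonsing}, and $A=P_xAP_y^t$ gives $P_y^t=A^{-1}P_xA$. Hence $P_y^t$ and $P_x$ are similar and therefore have the same trace; since the trace of a permutation matrix is its number of fixed points (and transposition preserves both the trace and the fixed points), $\pi_x$ and $\pi_y$ fix the same number of elements of $\{1,\ldots,v\}$. As $x$ and $y$ were arbitrary, the number of fixed points of $\pi_x$ does not depend on $x$, which is the claim.

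I do not anticipate a genuine obstacle; the proposition is essentially the classical fixed-point identity applied simultaneously to every $2$-projection, with the projections serving to chain the coordinates together. The only point requiring a little care is the bookkeeping in the first step — keeping track of which coordinate indexes the points and which the blocks of $\Pi_{xy}(C)$ — but this is harmless, since the similarity argument in the second step is symmetric in $x$ and $y$, so passing to the dual (transposed) design makes no difference.
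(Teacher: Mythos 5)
Your proof is correct and follows essentially the same route as the paper: observe that $(\pi_x,\pi_y)$ acts as an automorphism of the projection $A=\Pi_{xy}(C)$, use the invertibility of $A$ (Lemma~\ref{nonsing}) to deduce that the two permutation matrices are similar, and compare traces. One tiny algebra slip: from $A=P_xAP_y^t$ one gets $P_y=A^{-1}P_xA$ (equivalently $P_y^t=A^{-1}P_x^tA$), not $P_y^t=A^{-1}P_xA$ as you wrote, but this is immaterial since transposition preserves the trace, as you yourself note.
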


\begin{proof}
The claim follows from the observation that $(\pi_x,\pi_y)$ is an
automorphism of the symmetric design $A=\Pi_{xy}(C)$, for all $1\le x < y\le n$.
If $P_x$ and $P_y$ are the corresponding permutation matrices, then $A=P_x A P_y^t$
holds. By Lemma~\ref{nonsing} we can write this as $P_y=A^{-1}P_x A$.
Now $P_x$ and $P_y$ have the same trace, and this is the number of
fixed points of $\pi_x$ and $\pi_y$.
\end{proof}

As a consequence, autotopy groups of $\P$-cubes have the same number
of orbits on each coordinate.

\begin{proposition}
Let $G\le \Atop(C)$ be an autotopy group of $C\in \P^n(v,k,\lambda)$.
For any $x\in \{1,\ldots,n\}$, $G_x=\{\pi_x \mid \pi\in G\}$ is a permutation
group acting on $\{1,\ldots,v\}$; the number of orbits of $G_x$ does not
depend on the choice of~$x$.
\end{proposition}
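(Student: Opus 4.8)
The plan is to reduce the statement to the previous proposition (Proposition~\ref{fixp}) by a standard counting argument: the number of orbits of a finite permutation group equals the number of orbits of each of its elements averaged over the group — more precisely, by Burnside's lemma, the number of orbits of $G_x$ on $\{1,\ldots,v\}$ is $\frac{1}{|G_x|}\sum_{\sigma\in G_x}\mathrm{fix}(\sigma)$. So first I would argue that the map $G\to G_x$, $\pi\mapsto\pi_x$, is a group homomorphism, hence $G_x$ is indeed a permutation group on $\{1,\ldots,v\}$ (this is essentially immediate from the coordinatewise composition in $\Atop(C)$). The slightly delicate point is that this homomorphism may fail to be injective, so summing $\mathrm{fix}(\pi_x)$ over $\pi\in G$ overcounts each element of $G_x$ by a constant factor $|\ker|$; but that factor is the same for every coordinate~$x$ precisely because, by Proposition~\ref{fixp}, an autotopy~$\pi$ has $\pi_x=\mathrm{id}$ for one coordinate if and only if it has $\pi_y=\mathrm{id}$ for every other coordinate (the identity has $v$ fixed points, the maximum, and no other permutation of $\{1,\ldots,v\}$ does).

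Carrying this out, I would: (1) note $|\ker(\pi\mapsto\pi_x)|$ is independent of~$x$ — indeed, by the second proposition in this section's spirit, or more simply because $\pi_x=\mathrm{id}$ forces $\pi_x$ to have $v$ fixed points, whence by Proposition~\ref{fixp} every $\pi_y$ has $v$ fixed points, i.e. $\pi_y=\mathrm{id}$, so in fact all kernels coincide as subgroups of~$G$; call this common kernel~$K$ and $|K|=d$; (2) by Burnside's lemma applied to $G_x$ acting on $\{1,\ldots,v\}$,
$$
\#\{\text{orbits of } G_x\} \;=\; \frac{1}{|G_x|}\sum_{\sigma\in G_x}\mathrm{fix}(\sigma)
\;=\; \frac{1}{|G|/d}\cdot\frac{1}{d}\sum_{\pi\in G}\mathrm{fix}(\pi_x)
\;=\; \frac{1}{|G|}\sum_{\pi\in G}\mathrm{fix}(\pi_x);
$$
(3) apply Proposition~\ref{fixp}, which gives $\mathrm{fix}(\pi_x)=\mathrm{fix}(\pi_y)$ for every $\pi\in G$ and all $x,y$, so the final sum — and hence the number of orbits — is independent of~$x$.

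The only real obstacle is the bookkeeping around non-injectivity of $\pi\mapsto\pi_x$; once one observes that all these kernels are literally the same subgroup of~$G$ (thanks to Proposition~\ref{fixp} and the extremality of the identity's fixed-point count), the factors of~$d$ cancel and the argument is purely formal. Everything else is the orbit-counting lemma together with the already-established equality of fixed-point numbers. I do not expect any genuine difficulty beyond phrasing this cleanly, and an alternative, even lighter route is available: for each~$x$, a transversal of~$K$ in~$G$ maps bijectively onto~$G_x$, so one may sum over that fixed transversal and invoke Proposition~\ref{fixp} directly, avoiding any mention of~$d$.
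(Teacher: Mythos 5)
Your proposal is correct and follows essentially the same route as the paper: apply the Burnside--Cauchy--Frobenius lemma to $G_x$ and replace the sum over $G_x$ by a sum over $G$, then invoke Proposition~\ref{fixp} to see that $\sum_{\pi\in G}f(\pi_x)$ is independent of~$x$. Your extra bookkeeping about the kernel of $\pi\mapsto\pi_x$ is sound (and in fact that kernel is trivial, since a component equal to the identity fixes $v$ points and hence forces every component to be the identity), but the paper simply passes over this point.
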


\begin{proof}
By the Burnside--Cauchy--Frobenius lemma, the number of orbits can be expressed as
$$\frac{1}{|G_x|} \sum_{\pi_x \in G_x} f(\pi_x) = \frac{1}{|G|} \sum_{\pi \in G} f(\pi_x).$$
Here $f(\pi_x)$ is the number of fixed points and does not depend
on~$x$ by Proposition~\ref{fixp}.
\end{proof}

Now it is clear that an autotopy group $G\le \Atop(C)$ of a $\P$-cube~$C$ acts
(sharply) transitively on one coordinate if and only if it acts (sharply) transitively
on every coordinate; cf.\ \cite[Proposition~3.7]{KR24}. Furthermore, bounds on
the number of fixed points of automorphisms of symmetric $(v,k,\lambda)$
designs apply to all components $\pi_x$ of autotopies of
$\P^n(v,k,\lambda)$-cubes. For example, $f(\pi_x)\le v/2$ by
\cite[Theorem~3]{WF70} and $f(\pi_x)\le k-\sqrt{k-\lambda}$ by
\cite[Corollary~2.5.6]{SBW83}.

On the other hand, autotopies of $\C$-cubes can have components with
different numbers of fixed points. The $\C^3(7,3,1)$ cube
of~\cite[Example~2.2]{KPT24} has an autotopy of order~$7$ with two
components fixing no points, and the third component fixing~$7$ points.
In~\cite[Propositions~5.1 and 5.3]{KPT24}
a total of $2396$ inequivalent $\C^3(16,6,2)$-cubes have been constructed.
They have autotopies with two components fixing no points, and the third
component fixing $f=2$, $4$, $6$, $8$, $12$, $14$, or $16$ points.
This also provides counterexamples for bounds on the number of fixed
points and other claims stated above.

Our next goal is to classify $\P^n(11,5,2)$-cubes with nontrivial autotopy
groups. It suffices to consider autotopies of prime orders~$p$.
By~\cite[Theorem~2.7]{MA71}, either $p$ divides $v$ or $p\le k$.
The unique $(11,5,2)$ design has full automorphism group of order $660$,
hence all possible orders $p\in \{2,3,5,11\}$ occur for dimension $n=2$.
Cubes with autotopies of order~$11$ are obtained from higher-dimensional
$(11,5,2)$ difference sets and have already been classified in~\cite{KR24};
the result is reproduced in the first row of Table~\ref{tab2}. Interestingly,
each one of these cubes has full autotopy group of order~$55$ isomorphic
to $\Z_{11}\rtimes \Z_5$.

For $p=5$ we adopt the classification strategy from the previous section of
successively increasing the dimension. By Proposition~\ref{fixp} we know that
every component of an autotopy of order~$5$ has one fixed point and two cycles
of length~$5$. Crucially, autotopies are preserved when $\P^n(11,5,2)$-cubes
are restricted to $n-1$ dimensions by deleting a coordinate in the
OA-representation. We can therefore extend the $\P^{n-1}(11,5,2)$-cubes
with autotopy of order~$5$, respecting the autotopy on the added coordinate
and the conditions from \cite[Proposition~2.4]{KR24}. If we do this exhaustively,
we get all $\P^n(11,5,2)$-cubes with the prescribed autotopy. The procedure
was implemented in the C programming language and required a modest amount of
CPU time to perform a full classification for $p=5$. Nauty and Traces~\cite{MP14}
were used to eliminate equivalent cubes and to compute full autotopy groups.

\begin{proposition}\label{v11p5}
Up to equivalence, the numbers of $\P^n(11,5,2)$-cubes with an
autotopy of order~$5$ are given in the second row of Table~\textup{\ref{tab2}}.
\end{proposition}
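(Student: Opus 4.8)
The statement of Proposition~\ref{v11p5} is purely computational: it asserts that a specific exhaustive search yields the counts tabulated in the second row of Table~\ref{tab2}. The plan is therefore not to give a traditional deductive proof, but to justify that the search described in the surrounding text is correct and complete, and then to record that the computation was carried out.

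First I would recall the key structural fact, already established, that makes the incremental approach valid: an autotopy $\pi=(\pi_1,\dots,\pi_n)$ of a $\P^n(11,5,2)$-cube restricts to an autotopy $(\pi_1,\dots,\pi_{n-1})$ of any $\P^{n-1}(11,5,2)$-cube obtained by deleting a coordinate from the OA-representation, since deleting a coordinate commutes with the coordinatewise action of $S_v\wr S_n$ on the remaining coordinates and preserves all $2$-projections among them. Conversely, every $\P^n(11,5,2)$-cube with an autotopy of prime order $5$ arises by adding one coordinate to some $\P^{n-1}$-cube with an autotopy of order $5$. Combined with Proposition~\ref{fixp}, which forces each $\pi_x$ to have exactly one fixed point and two $5$-cycles (the unique cycle type of an order-$5$ automorphism of the $(11,5,2)$ design, as $f(\pi_x)\le v/2$ and $5\mid v-f$), this reduces the classification to a finite backtracking search: starting from the $(11,5,2)$ design (dimension $n=2$) equipped with an order-$5$ automorphism, repeatedly enumerate all valid extensions by one coordinate that respect a prescribed order-$5$ permutation on the new coordinate and the necessary-and-sufficient conditions of \cite[Proposition~2.4]{KR24}, performing isomorph rejection at each stage.

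Next I would spell out the isomorph-rejection and termination details. Equivalence of cubes-with-autotopy is again an orbit problem under $S_v\wr S_n$ (restricted to those elements normalizing the prescribed cyclic group up to conjugacy), so canonical forms computed by nauty and Traces~\cite{MP14} suffice to keep exactly one representative per equivalence class at each dimension and to compute the full autotopy groups reported alongside the counts; the search terminates at the first dimension $n$ for which no valid extension exists, and the preceding bound $\nu(v,k,\lambda)$-type argument (or simply the empirical emptiness at that level) certifies that no higher-dimensional examples exist. I would then state that this procedure was implemented in C, that it ran to completion, and that the resulting counts are those in the second row of Table~\ref{tab2}, with files available at the cited web page for independent verification.

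The main obstacle here is not mathematical but a matter of trust in the computation: ensuring that the backtracking enumerates \emph{every} extension (no pruning beyond the logically valid conditions of \cite[Proposition~2.4]{KR24}), that the prescribed order-$5$ permutation on each new coordinate is chosen up to the relevant conjugacy without missing cases, and that the isomorph rejection neither merges inequivalent cubes nor splits an equivalence class. These are addressed by the standard safeguards—using a provably complete characterization for the extension step, canonical labelings for rejection, and cross-checking the final cubes independently with the GAP package~\cite{PAG}—so that the computational claim can be relied upon to the same degree as the other computer-assisted results in the paper.
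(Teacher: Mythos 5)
Your proposal is correct and follows essentially the same route as the paper: the authors likewise justify the count by the dimension-increasing backtracking search on OA-representations, using Proposition~\ref{fixp} to fix the cycle type (one fixed point, two $5$-cycles), the fact that autotopies survive coordinate deletion, the extension conditions of \cite[Proposition~2.4]{KR24}, and nauty/Traces for isomorph rejection. Your added derivation of the cycle type from $f(\pi_x)\le v/2$ and $5\mid v-f$ is a small but sound elaboration of what the paper states without argument.
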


\begin{table}[t]
\begin{tabular}{|c|ccccccccccc|}
\hline
 & \multicolumn{11}{c|}{$n$}\\[1mm]
$p$ & 2 & 3 & 4 & 5 & 6 & 7 & 8 & 9 & 10 & 11 & 12\\
\hline
\rule{0mm}{5mm}$11$ & 1 & 2 & 4 & 6 & 6 & 4 & 2 & 1 & 1 & 1 & 0 \\[1mm]
$5$ & 1 & 283 & 443 & 8 & 7 & 4 & 2 & 1 & 1 & 1 & 0 \\[1mm]
$3$ & 1 & 4758 & 0 & 0 & 0 & 0 & 0 & 0 & 0 & 0 & 0 \\[1mm]
$2$ & 1 & 5142 & 0 & 0 & 0 & 0 & 0 & 0 & 0 & 0 & 0 \\[1mm]
\hline
\rule{0mm}{5mm}Total & 1 & 10178 & 443 & 8 & 7 & 4 & 2 & 1 & 1 & 1 & 0 \\[1mm]
\hline
\end{tabular}
\vskip 2mm
\caption{The $\P^n(11,5,2)$-cubes with nontrivial autotopies.}\label{tab2}
\end{table}

All the new examples from the previous proposition have full autotopy
groups of order~$5$. Of course, the examples with full autotopy groups
of order~$55$ were also recovered. The next case $p=3$ was settled by
a similar calculation.

\begin{proposition}
The number of inequivalent $\P^n(11,5,2)$-cubes with an autotopy of order~$3$
is $4758$ for $n=3$ and $0$ for $n\ge 4$; see the third row of Table~\textup{\ref{tab2}}.
\end{proposition}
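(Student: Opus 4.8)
The plan is to follow exactly the computational strategy already described in the paragraph preceding Proposition~\ref{v11p5}, specialized to autotopies of prime order $p=3$. First I would recall that by \cite[Theorem~2.7]{MA71}, a nontrivial autotopy of prime order $p$ of a $\P^n(11,5,2)$-cube must satisfy $p\mid 11$ or $p\le 5$, so the only relevant primes are $p\in\{2,3,5,11\}$; the case $p=3$ is what concerns us here. By Proposition~\ref{fixp}, every component $\pi_x$ of an autotopy $\pi$ of order~$3$ has the same number of fixed points, and since $(\pi_x,\pi_y)$ is an automorphism of the unique $(11,5,2)$ design for each pair $x<y$, this fixed-point count is that of an order-$3$ automorphism of the biplane on $11$ points. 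One checks (using, e.g., $f(\pi_x)\le k-\sqrt{k-\lambda}=5-\sqrt{3}<4$, together with $f(\pi_x)\equiv v=11\pmod 3$, forcing $f(\pi_x)\in\{2\}$ to be impossible and leaving $f(\pi_x)=2$ ruled out by the congruence — in fact $11\equiv 2\pmod 3$ so $f(\pi_x)\equiv 2\pmod 3$, giving $f(\pi_x)=2$); more simply, the order-$3$ automorphisms of the $(11,5,2)$ design fix exactly two points and have three $3$-cycles. Hence every component is determined up to conjugacy, and the search is over cubes invariant under a fixed such permutation on each coordinate.

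Next I would describe the enumeration itself. One starts from the incidence matrix of the unique $(11,5,2)$ design, equipped with its order-$3$ automorphism, viewed as an $\mathrm{OA}(55,2,11,1)$; this is the $n=2$ case and contributes the single cube in Table~\ref{tab2}. One then repeatedly extends the OA-representation by one coordinate: to pass from a $\P^{n-1}(11,5,2)$-cube admitting the prescribed order-$3$ autotopy to a $\P^{n}(11,5,2)$-cube, one adds a new coordinate whose values are consistent with the fixed permutation of order~$3$ on that coordinate (so the extension is built orbit-by-orbit under the autotopy) and which satisfies the necessary and sufficient conditions of \cite[Proposition~2.4]{KR24}. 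Crucially, as noted in the paragraph before Proposition~\ref{v11p5}, deleting a coordinate in the OA-representation preserves autotopies, so every order-$3$-invariant cube of dimension~$n$ restricts to one of dimension~$n-1$, and the recursive extension is exhaustive. After each extension step, equivalent (partial) OA-representations are removed using canonical labelings from nauty and Traces~\cite{MP14}, and the full autotopy groups are recomputed to confirm the examples are genuinely order-$3$-invariant. The procedure terminates when no further extension is possible.

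Finally, I would report the outcome: at $n=3$ there are $4758$ inequivalent $\P^3(11,5,2)$-cubes admitting an autotopy of order~$3$, and at $n=3$ the extension to $n=4$ fails in every case, so the count is $0$ for all $n\ge4$ (using again that an order-$3$-invariant cube of dimension $n\ge4$ would restrict to one of dimension~$4$). These values populate the third row of Table~\ref{tab2}. The main obstacle is not conceptual but computational: the number of candidate one-coordinate extensions of an $\mathrm{OA}(55,n,11,1)$, even restricted to those respecting the autotopy and checked incrementally by backtracking against the conditions of \cite[Proposition~2.4]{KR24}, is large, so the search for $n=3$ must be organized carefully (adding the eleven symbols one at a time, with isomorph rejection after each, exactly as in the proof of Theorem~\ref{class731}) and implemented efficiently in C to be feasible; verifying that no dimension-$4$ extension exists is the decisive step that caps the table.
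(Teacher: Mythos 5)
Your proposal matches the paper's strategy for the enumeration at $n=3$: fix the conjugacy class of the order-$3$ component (two fixed points and three $3$-cycles on each coordinate, justified via Proposition~\ref{fixp}), extend the $55$ incident pairs of the unique $(11,5,2)$ design coordinate-by-coordinate subject to the conditions of \cite[Proposition~2.4]{KR24} and the prescribed autotopy, and reject isomorphs with nauty/Traces; this yields the $4758$ cubes. Where you diverge is the claim for $n\ge 4$: you propose to establish nonexistence by exhaustively attempting the extension of all $4758$ three-cubes (as the paper in fact does for $p=2$, at a cost of about two years of CPU time), whereas the paper disposes of this case by a short direct argument about the fixed elements. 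Since each coordinate has exactly two fixed symbols, the tuples of $\overline{C}$ fixed by the autotopy must have all coordinates in the two-element fixed set, and every $2$-projection of these fixed tuples must reproduce the four fixed incidences $(1,1),(1,2),(2,1),(2,2)$ of the design; the four triples $(1,1,1),(1,2,2),(2,1,2),(2,2,1)$ achieve this for $n=3$, but no set of four quadruples over a two-element alphabet has all six $2$-projections equal to all four pairs, so no order-$3$-invariant cube of dimension $\ge 4$ exists. Both routes are valid, but the paper's argument is essentially free, while yours would require a potentially heavy computation to certify the zero entries of the table; your intermediate derivation of the fixed-point count is also somewhat self-contradictory as written (you both ``rule out'' and conclude $f(\pi_x)=2$), though the final count of two fixed points is correct.
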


\begin{proof}
Automorphisms of order~$3$ of the unique $(11,5,2)$ design have
$2$ fixed points, each incident with $2$ fixed blocks. By
Proposition~\ref{fixp} we can assume that all components of
the autotopy are $\pi_x=(3,4,5)(6,7,8)$ $(9,10,11)$. An exhaustive
computer search, systematically extending the $55$ incident pairs
of the $(11,5,2)$ design, produced $4758$ inequivalent OA-representations
of $\P^3(11,5,2)$-cubes invariant under the prescribed autotopy.
A few hours of CPU time were required. In the next step none of the
$3$-cubes extend to~$4$ dimensions. This can be inferred directly
by considering the fixed elements. For $n=2$, incidences of the fixed
elements are $(1,1)$, $(1,2)$, $(2,1)$, $(2,2)$. The pairs can be
extended to triples $(1,1,1)$, $(1,2,2)$, $(2,1,2)$, $(2,2,1)$,
but not to quadruples of fixed elements satisfying all requirements.
\end{proof}

Five $3$-cubes from the previous proposition have full autotopy
groups of order~$12$ isomorphic to $(\Z_2\times \Z_2)\rtimes \Z_3$.
The $4753$ other $3$-cubes have full autotopy groups of
order~$3$. It remains to classify cubes with autotopies of
order~$p=2$, i.e.\ involutions.

\begin{proposition}\label{v11p2}
Up to equivalence, the number of $\P^n(11,5,2)$-cubes with an
involutory autotopy is $5142$ for $n=3$ and $0$ for $n\ge 4$;
see the fourth row of Table~\textup{\ref{tab2}}.
\end{proposition}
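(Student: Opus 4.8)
The plan is to classify $\P^n(11,5,2)$-cubes with an involutory autotopy by the same dimension-increasing search used above for autotopies of orders $3$ and $5$. The first step is to normalize the involution. An involutory automorphism of the unique $(11,5,2)$ design fixes exactly three points and three blocks — this is well known, and also follows from $f(\pi_x)\le k-\sqrt{k-\lambda}=5-\sqrt3$ together with the facts that $v-f$ is even and that the Sylow $2$-subgroup of the automorphism group is a Klein four-group, whose Burnside count on the $11$ points forces $f\equiv3\pmod4$ — and the three fixed points and fixed blocks are matched bijectively, each fixed point lying on exactly one fixed block. By Proposition~\ref{fixp}, every component $\pi_x$ of an involutory autotopy of $C\in\P^n(11,5,2)$ is again an involution with three fixed points, so after conjugating the coordinates independently we may assume $\pi_x=(4,5)(6,7)(8,9)(10,11)$ for all $x$, with common fixed points $\{1,2,3\}$.

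The second step is the exhaustive search. Starting from the $55$ incident pairs of the biplane, an $OA(55,2,11,1)$ invariant under $(\pi_1,\pi_2)$, I would extend the degree one coordinate at a time by the backtracking procedure of Section~\ref{sec2}, placing the symbols $1,\ldots,11$ into the new coordinate but retaining only partial arrays that are invariant under the prescribed autotopy on that coordinate and satisfy the conditions of \cite[Proposition~2.4]{KR24}; after each symbol is placed I would discard equivalent copies using nauty and Traces~\cite{MP14}, and finally compute the full autotopy groups. This produces, up to equivalence, all $\P^n(11,5,2)$-cubes admitting an involutory autotopy: $5142$ of them for $n=3$, and none extend to $n=4$, hence none exist for $n\ge4$, as recorded in the fourth row of Table~\ref{tab2}.

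I expect the nonexistence at $n=4$ to be the real obstacle. For $p=3$ it could be read off the fixed elements — in effect from the nonexistence of an $OA(4,4,2,2)$ — but the analogous shortcut fails for $p=2$. Writing $D=C|_{\{1,2,3\}^n}$, every pairwise projection $\Pi_{xy}(D)$ equals a $3\times3$ permutation matrix: the projection $\Pi_{xy}(C)$ restricted to the fixed rows and columns is a permutation matrix because the fixed-element structure is a matching, and the difference $\Pi_{xy}(C)-\Pi_{xy}(D)\ge0$ is a sum over $\pi$-orbits of length $2$, hence even, which forces $\Pi_{xy}(D)=\Pi_{xy}(C)$. However, a $3\times3\times3\times3$ binary array with three $1$'s and all pairwise projections equal to permutation matrices does exist — the diagonal — so the fixed substructure alone imposes no obstruction, unlike the $p=3$ case. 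The $0$ at $n=4$ therefore genuinely relies on the exhaustive extension of all $5142$ three-dimensional cubes, and the bulk of the work is to carry out that enumeration so that the entries of Table~\ref{tab2} are verified rather than merely left unrefuted.
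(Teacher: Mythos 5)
Your proposal is correct and follows essentially the same route as the paper: normalize the involution to three fixed points per coordinate (via Proposition~\ref{fixp}), run the dimension-increasing exhaustive extension with the conditions of \cite[Proposition~2.4]{KR24} and nauty/Traces isomorph rejection to obtain the $5142$ cubes for $n=3$, and establish the $0$ at $n=4$ by exhaustively failing to extend them. Your added observation that the fixed-element substructure extends to any dimension (so the shortcut that settled $p=3$ is unavailable here) is exactly the remark the paper makes, and the paper likewise relies on the brute-force extension, which it reports took about two years of CPU time.
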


\begin{proof}
Automorphisms of order $p=2$ of the $(11,5,2)$ design have~$3$ fixed
points and blocks, forming $3$ incident pairs. Now incidences of the
fixed elements can be extended to any dimension. The cubes exist for
$n=3$; we classified them by the algorithm described above. The
resulting $5142$ inequivalent $3$-cubes cannot be extended to
dimension $n=4$. This was established by running the algorithm in
parallel and required some $2$ years of CPU time in total. Not a
single extension was found, so $\P^n(11,5,2)$-cubes with an involutory
autotopy do not exist for $n\ge 4$.
\end{proof}

The five $\P^3(11,5,2)$-cubes with full autotopy groups of order~$12$
were also found in Proposition~\ref{v11p2}. Of the remaining cubes,
$71$ have full autotopy groups of order~$4$ isomorphic to $\Z_2\times \Z_2$
and $5066$ have full autotopy groups of order~$2$. We calculated the total
numbers of $\P^n(11,5,2)$-cubes with nontrivial autotopies by concatenating
the lists for $p=11$, $5$, $3$, $2$ and eliminating equivalent copies.

\begin{theorem}\label{v11aut}
Cubes in $\P^n(11,5,2)$ with nontrivial autotopy groups exist
if and only if $2\le n\le 11$. The number of such cubes up to
equivalence is $1$, $10178$, $443$, $8$, $7$, $4$, $2$, $1$, $1$,
and $1$ for successive dimensions~$n$ in this range.
\end{theorem}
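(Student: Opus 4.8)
The plan is to assemble Theorem~\ref{v11aut} entirely from the
classification data already established in Propositions~\ref{v11p5},
the proposition on autotopies of order~$3$, and Proposition~\ref{v11p2},
together with the order-$11$ count reproduced in the first row of
Table~\ref{tab2}. Since a nontrivial autotopy group contains an element
of prime order~$p$, and by \cite[Theorem~2.7]{MA71} we have
$p\in\{2,3,5,11\}$ for the unique $(11,5,2)$ design, every
$\P^n(11,5,2)$-cube with a nontrivial autotopy carries an autotopy of
one of these four prime orders. Hence the set of such cubes (up to
equivalence) is the union of the four classified families, and the first
step is simply to record that these families exhaust all possibilities.

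Next I would take the nonexistence results for $n\ge 4$ from the two
propositions on involutory and order-$3$ autotopies at face value:
neither family extends beyond $n=3$. Consequently, for $n\ge 4$ only
the $p=5$ and $p=11$ families can contribute, and both of these vanish
for $n\ge 12$ (the last entries in their rows of Table~\ref{tab2} are
$0$), while they are nonempty throughout $4\le n\le 11$. For $n=2$ and
$n=3$ all four families are nonempty. This establishes the stated range
$2\le n\le 11$ for the existence claim.

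The quantitative part requires combining the four lists and performing
isomorph rejection across them, exactly as indicated in the paragraph
preceding the theorem: concatenate the equivalence-class representatives
for $p=11$, $5$, $3$, $2$ and eliminate duplicates using the canonical
labelings from nauty and Traces~\cite{MP14}. For $n\ge 4$ the $p=3$ and
$p=2$ rows are zero, so the combined count coincides with the $p=5$
count (which already subsumes the $p=11$ examples, each having full
autotopy group of order~$55$ divisible by~$5$); this yields
$443,8,7,4,2,1,1,1$ for $n=4,\ldots,11$. For $n=3$ one must genuinely
merge $\{11\colon 2\}$, $\{5\colon 283\}$, $\{3\colon 4758\}$,
$\{2\colon 5142\}$ and subtract overlaps — the five cubes with full
autotopy group of order~$12$ appear in both the $p=2$ and $p=3$ lists,
the order-$55$ cubes appear in the $p=5$ list, and so on — arriving at
the reported total $10178$; for $n=2$ there is the single $(11,5,2)$
design. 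The main obstacle is entirely computational: verifying that the
cross-list isomorph rejection was carried out correctly and that the
order-$3$ and involutory families really do terminate at $n=3$, the
latter resting on roughly two years of CPU time in
Proposition~\ref{v11p2}. There is no additional mathematical content
beyond bookkeeping on top of the already-established propositions.
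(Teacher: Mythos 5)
Your proposal matches the paper's own argument: the authors likewise invoke \cite[Theorem~2.7]{MA71} to restrict to prime orders $p\in\{2,3,5,11\}$, rely on the four classifications (the order-$11$ row from~\cite{KR24} and Propositions~\ref{v11p5}--\ref{v11p2}), and obtain the totals by concatenating the four lists and eliminating equivalent copies. Your overlap accounting (the five order-$12$ cubes shared between the $p=2$ and $p=3$ lists, the order-$55$ cubes subsumed in the $p=5$ list, giving $2+283+4758+5142-7=10178$ for $n=3$ and the $p=5$ counts for $n\ge 4$) is exactly the bookkeeping the paper performs.
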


In \cite{KPT24} and \cite{KR24}, constructions of $C^3(16,6,2)$ and
$\P^3(16,6,2)$-cubes with prescribed autotopy groups $G$ were performed
by a different approach. Essentially, all orthogonal arrays
$OA(1536,3,16,2)$ and $OA(96,$ $3,16,1)$ invariant under~$G$ were
constructed and the ones not representing $\C$- and $\P$-cubes
were discarded. Because of high proportions of ``bad'' OAs, rather
large prescribed groups had to be used: $|G|\ge 512$ in~\cite{KPT24}
and $|G|\ge 18$ in~\cite{KR24}. We now have a more efficient approach
for $\P$-cubes and can handle smaller groups. As an example, we present
the following result.

\begin{proposition}\label{v16aut8}
There are exactly $1076$ inequivalent $\P^3(16,6,2)$-cubes with an
autotopy of order~$8$ acting in two cycles on each coordinate.
\end{proposition}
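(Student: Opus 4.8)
The plan is to carry out an exhaustive computer classification, exactly parallel to the strategy used in Propositions~\ref{v11p5}, \ref{v11p2} and the surrounding results for $\P^n(11,5,2)$-cubes, but now for the unique symmetric $(16,6,2)$ biplane in dimension $n=3$ and a prescribed autotopy $\pi$ of order~$8$. First I would record the structural constraints forced on $\pi$. Since each pair $(\pi_x,\pi_y)$ is an automorphism of $\Pi_{xy}(C)$, which is a $(16,6,2)$ design, and since $\pi$ is required to act in two cycles on each coordinate, every component $\pi_x$ is a permutation of $\{1,\dots,16\}$ consisting of two $8$-cycles, hence fixed-point free. By Proposition~\ref{fixp} this is consistent: all three components have the same number of fixed points, namely~$0$. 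I would then fix a canonical representative for such a permutation, say $\pi_x=(1,2,\ldots,8)(9,10,\ldots,16)$ for $x=1,2,3$, so that the search is for $\{0,1\}$-functions $C$ on $\{1,\dots,16\}^3$ that are invariant under $\pi$ and whose three $2$-projections are incidence matrices of $(16,6,2)$ designs. Equivalently, in the OA-representation, I would enumerate all $OA(96,3,16,1)$ of index~$6$ invariant under $\pi$ and satisfying the necessary and sufficient conditions of \cite[Proposition~2.4]{KR24}.

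Second, I would organize the enumeration by the dimension-increasing backtracking already described in Section~\ref{sec2}: start from the $96$ incident point--block pairs of the $(16,6,2)$ design (which, being the $2$-projection $\Pi_{12}(C)$, must itself be $\pi$-invariant, so a representative can be chosen from the orbits of $\Aut$-conjugates of admissible automorphisms of order~$8$ of the biplane), and extend each incident pair to a triple by assigning a value in the third coordinate, respecting $\pi$-invariance and checking the partial conditions at each node. Because $\pi$ has order~$8$, its orbits on $\{1,\dots,16\}^3$ have length $8$ (when the triple lies in a short orbit) or, generically, length~$8$ as well since the diagonal action of a single generator has cycle structure determined by the lcm of the component cycle lengths; in any case the invariant cubes are unions of $\pi$-orbits, which cuts the search space by roughly a factor of~$8$ at each level. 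Isomorph rejection against the stabilizer of $\pi$ inside the relevant symmetry group (i.e.\ the normalizer $N_{S_{16}\wr S_3}(\langle\pi\rangle)$, or a convenient subgroup thereof) is applied to partial solutions, using canonical labellings from nauty and Traces~\cite{MP14}, exactly as in the proofs above.

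Third, once all $\pi$-invariant $\P^3(16,6,2)$-cubes are produced, I would compute the full autotopy group of each with nauty/Traces, retain one representative per paratopy class, and tally the count, which the statement asserts to be~$1076$. A subtlety worth flagging is that two cubes invariant under our fixed $\pi$ may be equivalent via a paratopy that does not normalize $\langle\pi\rangle$; this is automatically handled because the final equivalence test is performed in the full group $S_{16}\wr S_3$ rather than in the normalizer, so the $1076$ genuinely counts paratopy classes of cubes admitting \emph{some} autotopy of the prescribed type.

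The main obstacle I expect is neither the theory nor the isomorph rejection but raw search-space size: the number of extensions of the $96$-pair OA to a $\pi$-invariant triple system, before pruning, is large, and although $\pi$-invariance and the backtracking conditions of \cite[Proposition~2.4]{KR24} prune aggressively, the computation is likely to demand substantial CPU time and careful parallelization, comparable to the two CPU-years reported in Proposition~\ref{v11p2}. A secondary concern is correctly handling the short $\pi$-orbits and ensuring the canonical-form machinery treats the prescribed-group structure consistently; a standard safeguard is to independently re-verify the final list by checking $\pi$-invariance, the $2$-projection conditions, and pairwise inequivalence of the $1076$ representatives directly, e.g.\ with the tools in the PAG package~\cite{PAG}.
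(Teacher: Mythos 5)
Your overall strategy -- seed the search with a $(16,6,2)$ design carrying an automorphism of order~$8$ in two $8$-cycles, extend to triples by the dimension-increasing backtracking while respecting the prescribed autotopy and the conditions of \cite[Proposition~2.4]{KR24}, do isomorph rejection with nauty/Traces, and perform the final equivalence test in the full group $S_{16}\wr S_3$ -- is exactly the method used in the paper. However, there is one genuine error that would make your enumeration incomplete: the $(16,6,2)$ biplane is \emph{not} unique. There are three pairwise non-isomorphic $(16,6,2)$ designs (this fact is central to the paper, which names them $\D_R$, $\D_G$, $\D_B$ and uses them to exhibit cubes with three non-isomorphic projections). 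Every $2$-projection of a $\P^3(16,6,2)$-cube is one of these three designs, and the extension algorithm started from a single design $\D$ can only ever produce cubes having $\D$ as at least one projection (up to conjugation of coordinates). Per Table~\ref{tab3}, the cubes with projection multiset $(\D_G,\D_G,\D_G)$, $(\D_G,\D_G,\D_B)$, $(\D_G,\D_B,\D_B)$, or $(\D_B,\D_B,\D_B)$ number $124+144+48+48=364$; all of these would be missed if you seeded only with $\D_R$, so your count would come out short of $1076$.

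The fix is what the paper actually does: verify that each of the three designs admits an automorphism of order~$8$ acting in two $8$-cycles on points and on blocks, run the extension from each of the three seeds (obtaining $440$, $744$, and $596$ invariant cubes respectively), then concatenate the three lists and eliminate equivalent copies -- the overlaps (a cube with projections $(\D_R,\D_G,\D_B)$ appears in all three lists) account for $440+744+596=1780$ collapsing to $1076$. Your remaining points -- that the components are fixed-point free consistently with Proposition~\ref{fixp}, that one may normalize each component to a fixed two-$8$-cycle permutation by independent relabeling of the coordinates, that one must range over $\Aut$-classes of admissible order-$8$ automorphisms of the seed design, and that the final paratopy test must be done in the full group rather than the normalizer of $\langle\pi\rangle$ -- are all correct and worth keeping.
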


\begin{table}[!b]
\begin{tabular}{|c|cccccccccc|c|}
\hline
$(T,P)$ & \rotatebox{90}{\rule{-1.5mm}{0mm}$(\D_R,\D_R,\D_R)$\rule{1mm}{0mm}}
& \rotatebox{90}{\rule{-1.5mm}{0mm}$(\D_R,\D_R,\D_G)$\rule{1mm}{0mm}}
& \rotatebox{90}{\rule{-1.5mm}{0mm}$(\D_R,\D_R,\D_B)$\rule{1mm}{0mm}}
& \rotatebox{90}{\rule{-1.5mm}{0mm}$(\D_R,\D_G,\D_G)$\rule{1mm}{0mm}}
& \rotatebox{90}{\rule{-1.5mm}{0mm}$(\D_R,\D_G,\D_B)$\rule{1mm}{0mm}}
& \rotatebox{90}{\rule{-1.5mm}{0mm}$(\D_R,\D_B,\D_B)$\rule{1mm}{0mm}}
& \rotatebox{90}{\rule{-1.5mm}{0mm}$(\D_G,\D_G,\D_G)$\rule{1mm}{0mm}}
& \rotatebox{90}{\rule{-1.5mm}{0mm}$(\D_G,\D_G,\D_B)$\rule{1mm}{0mm}}
& \rotatebox{90}{\rule{-1.5mm}{0mm}$(\D_G,\D_B,\D_B)$\rule{1mm}{0mm}}
& \rotatebox{90}{\rule{-1.5mm}{0mm}$(\D_B,\D_B,\D_B)$\rule{1mm}{0mm}}
& Total \\[1mm]
\hline
\hline
 \rule{0mm}{5mm}$(8,1)$ & 4 & 48 & 76 & 124 & 152 & 56 & 102 & 136 & 48 & 35 & 781 \\[1mm]
 $(8,2)$ & 21 & 0 & 8 & 72 & 0 & 64 & 0 & 8 & 0 & 6 & 179 \\[1mm]
 $(8,3)$ & 0 & 0 & 0 & 0 & 0 & 0 & 6 & 0 & 0 & 5 & 11 \\[1mm]
 $(8,6)$ & 1 & 0 & 0 & 0 & 0 & 0 & 0 & 0 & 0 & 2 & 3 \\[1mm]
 $(16,1)$ & 23 & 8 & 0 & 16 & 0 & 0 & 12 & 0 & 0 & 0 & 59 \\[1mm]
 $(16,2)$ & 18 & 0 & 0 & 8 & 0 & 0 & 0 & 0 & 0 & 0 & 26 \\[1mm]
 $(16,3)$ & 0 & 0 & 0 & 0 & 0 & 0 & 4 & 0 & 0 & 0 & 4 \\[1mm]
 $(16,6)$ & 4 & 0 & 0 & 0 & 0 & 0 & 0 & 0 & 0 & 0 & 4 \\[1mm]
 $(32,1)$ & 1 & 0 & 0 & 0 & 0 & 0 & 0 & 0 & 0 & 0 & 1 \\[1mm]
 $(32,2)$ & 3 & 0 & 0 & 0 & 0 & 0 & 0 & 0 & 0 & 0 & 3 \\[1mm]
 $(32,3)$ & 1 & 0 & 0 & 0 & 0 & 0 & 0 & 0 & 0 & 0 & 1 \\[1mm]
 $(32,6)$ & 1 & 0 & 0 & 0 & 0 & 0 & 0 & 0 & 0 & 0 & 1 \\[1mm]
 $(48,2)$ & 1 & 0 & 0 & 0 & 0 & 0 & 0 & 0 & 0 & 0 & 1 \\[1mm]
 $(48,6)$ & 1 & 0 & 0 & 0 & 0 & 0 & 0 & 0 & 0 & 0 & 1 \\[1mm]
 $(96,6)$ & 1 & 0 & 0 & 0 & 0 & 0 & 0 & 0 & 0 & 0 & 1 \\[1mm]
\hline
\rule{0mm}{5mm}Total & 80 & 56 & 84 & 220 & 152 & 120 & 124 & 144 & 48 & 48 & 1076 \\[1mm]
\hline
\end{tabular}
\vskip 2mm
\caption{$\P^3(16,6,2)$-cubes with an autotopy of order~$8$ acting semiregularly.
Table of the statistics of the $1076$ cubes in Proposition~\ref{v16aut8}.}\label{tab3}
\end{table}

\begin{proof}
Three $(16,6,2)$ designs exist~\cite{MR07}. We will call them the \emph{red},
\emph{green} and \emph{blue} design and denote them by $\D_R$, $\D_G$, and $\D_B$.
The full automorphism groups are of orders $|\Aut(\D_R)| = 11520$,
$|\Aut(\D_G)| = 768$, and $|\Aut(\D_B)| = 384$. All three designs have
automorphisms of order~$8$ acting in two cycles of length~$8$ on the points
and blocks. We ran the dimension increasing algorithm and found that they
extend to $440$, $744$, and $596$ inequivalent $\P^3(16,6,2)$-cubes invariant
under the prescribed autotopy, respectively. The total number of extensions
was determined by concatenating the lists and eliminating equivalent cubes.
\end{proof}

\begin{figure}[!b]
\begin{center}
\includegraphics[width=127mm]{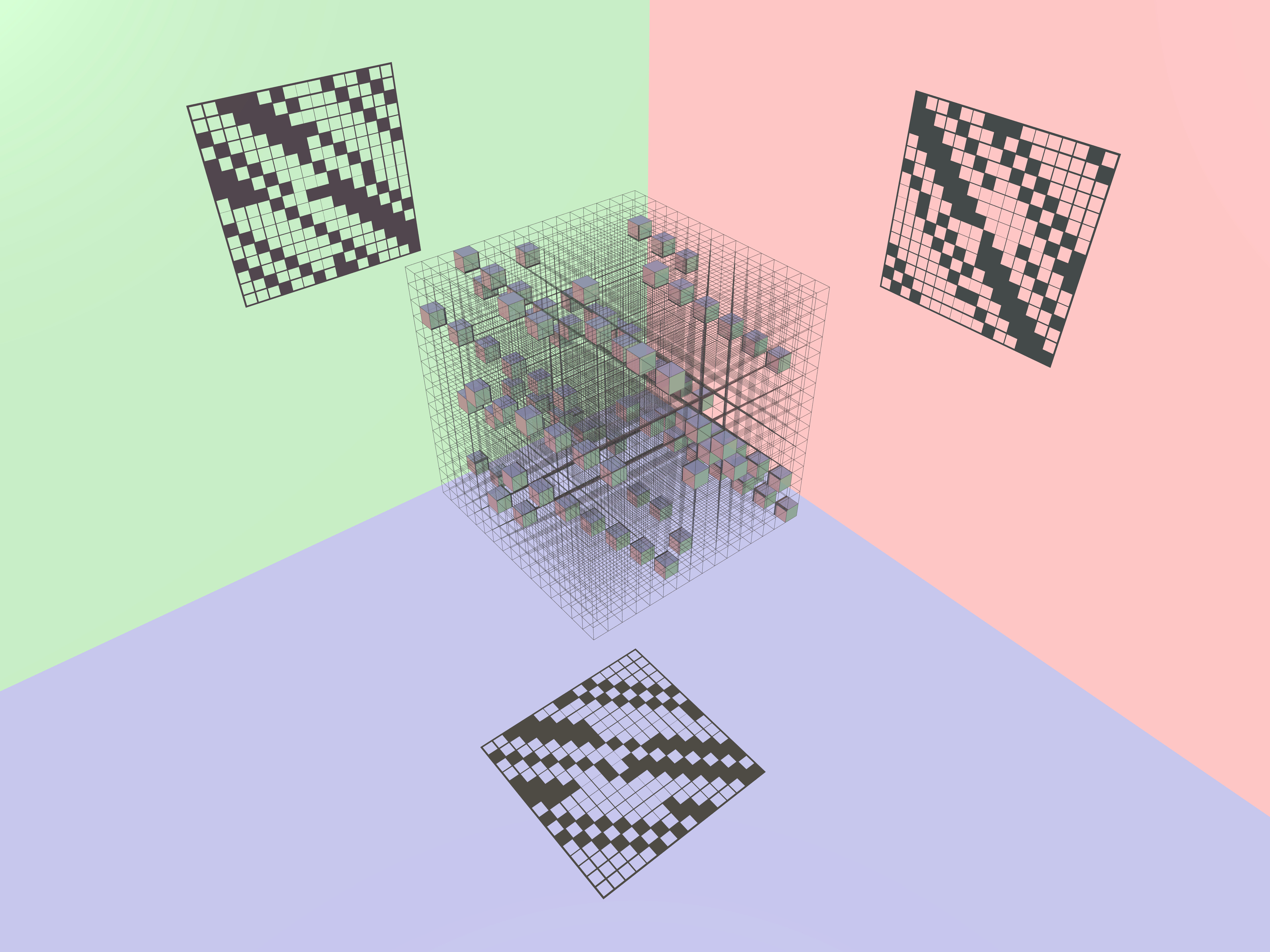}
\end{center}
\caption{A $\P^3(16,6,2)$-cube with non-isomorphic projections.}\label{fig1}
\end{figure}

A detailed statistics of the $1076$ cubes is given in Table~\ref{tab3}.
The cubes are divided according to the size of the full auto(para)topy group and
the $2$-projections. The \emph{full autoparatopy group} $\Apar(C)$ contains all combinations
of isotopy and conjugation mapping~$C$ onto itself. The group size is
given as $(T,P)$, where $T=|\Atop(C)|$ and $P=|\Apar(C)| / T$. A question
whether $\P^3(16,6,2)$-cubes with three non-isomorphic projections exist was
raised in~\cite{KR24}. From the table, we see that there are $152$ cubes with
projections $(\D_R,\D_G,\D_B)$ and the prescribed autotopy of order~$8$. One
example is shown in Figure~\ref{fig1}, with red, green, and blue light sources
so that the projections are in the appropriate color.

\section{Difference sets}\label{sec4}

Let $G$ be an additively written group of order~$v$, not necessarily
abelian. Henceforth we will index cubes with the elements
of~$G$ instead of the integers $\{1,\ldots,v\}$. Thus, $\C$- and
$\P$-cubes are functions $C:G^n\to \{0,1\}$ with all $2$-sections or
$2$-projections being incidence matrices of symmetric
$(v,k,\lambda)$ designs.

A \emph{$(v,k,\lambda)$ difference set} in~$G$ is a $k$-subset
$D\subseteq G$ such that every element $g\in G\setminus\{0\}$ can be
written as $g=a-b$ with $a,b\in D$ in exactly~$\lambda$ ways. By
\cite[Theorem~3.1]{KPT24}, difference sets give rise to $\C$-cubes
of arbitrary dimension $n$. Using the Iverson bracket, a cube
$C\in \C^n(v,k,\lambda)$ is given by
$$C(g_1,\ldots,g_n)=[g_1+\ldots+g_n\in D].$$
This is a special case of \cite[Theorem~2.9]{dL90}. Properties of
these \emph{difference cubes} were studied in Section~3 of~\cite{KPT24}.
In Section~4, the construction was generalized to the so-called
\emph{group cubes} \cite[Theorem~4.1]{KPT24}. This construction
gives examples not equivalent to any difference cube, having
non-isomorphic $(v,k,\lambda)$ designs as $2$-sections. The construction
of \cite[Theorem~4.1]{KPT24} still requires difference sets; the only
known examples of $\C$-cubes not coming from this constructions are
the $\C^3(16,6,2)$-cubes of \cite[Proposition~5.3]{KPT24}.

A stronger kind of difference sets are needed for $\P$-cubes.
An \emph{$n$-dimensional $(v,k,\lambda)$ difference set}
\cite[Definition~3.1]{KR24} is a $k$-subset of $n$-tuples $D\subseteq G^n$
such that $\{d_x-d_y \mid d\in D\}$ is an ``ordinary'' $(v,k,\lambda)$ difference
set for every pair of coordinates $1\le x<y\le n$.
By~\cite[Proposition~3.2]{KR24}, the development
$$\dev D = \{(d_1+g,\ldots,d_n+g) \mid g\in G,\, d\in D\}\subseteq G^n$$
is an OA-representation~$\overline{C}$ of a cube $C\in \P^n(v,k,\lambda)$
indexed by~$G$. Properties of these cubes were studied in Section~3
of~\cite{KR24}. There, it was shown that $n$-dimensional difference sets
can be normalized so that all $n$-tuples in~$D$ start with a $0$
coordinate. Using a different kind of normalization, we will now prove a
bound on the dimension of~$D$ stronger than the bound~\eqref{dimbound}.

\begin{theorem}\label{dsbound}
If an $n$-dimensional $(v,k,\lambda)$ difference set~$D\subseteq G^n$ exists,
then $n\le v$.
\end{theorem}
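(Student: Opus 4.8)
The plan is to exploit a normalization of the $n$-dimensional difference set $D$ that is different from the ``all tuples start with $0$'' normalization used in~\cite{KR24}, and to then count incidences along a diagonal. Since $\dev D$ is invariant under the diagonal translation action of $G$, we are free to translate individual tuples of $D$ by arbitrary group elements without changing the equivalence class of the resulting cube; more importantly, the defining property of $D$ (that $\{d_x-d_y \mid d\in D\}$ is a $(v,k,\lambda)$ difference set for every $x<y$) depends only on the \emph{differences} of coordinates within each tuple, so it is preserved if we replace each $d=(d_1,\ldots,d_n)\in D$ by $d+g_d=(d_1+g_d,\ldots,d_n+g_d)$ for group elements $g_d$ chosen independently for each $d$. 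I would use this freedom to fix the first coordinate: normalize so that $d_1=0$ for every $d\in D$. Then $D$ is determined by the $k$ tuples $(0,d_2,\ldots,d_n)$.

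Next I would look at what the difference-set condition forces on the first column, i.e.\ on the pairs $(x,y)=(1,j)$ for $j=2,\ldots,n$. For each such $j$, the multiset $\{d_1-d_j \mid d\in D\}=\{-d_j \mid d\in D\}$ must be (the underlying set of) a $(v,k,\lambda)$ difference set, and in particular it must be a \emph{set} of size $k$ — so the $k$ values $d_j$, as $d$ ranges over $D$, are pairwise distinct for each fixed coordinate $j\ge 2$. The same holds for $j=1$ trivially after normalization only in the degenerate sense that all $d_1$ are equal; the real content is for $j\ge 2$. So for each coordinate $j\in\{2,\ldots,n\}$ we get an injection $D\hookrightarrow G$, $d\mapsto d_j$. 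Now consider the element $0\in G$: since each of these maps is injective, at most one tuple $d\in D$ has $d_j=0$, for each fixed $j\ge 2$. The key counting step is then to bound how many coordinates $j$ can have the property that $0$ is \emph{not} in the image $\{d_j \mid d\in D\}$ versus how many can hit $0$, and relate this to the size $v=|G|$; alternatively, and more cleanly, one considers for a fixed tuple $d\in D$ how many of its coordinates can equal $0$. Since $d_1=0$ already, and two coordinates $d_x=d_y=0$ would contribute $0$ to the difference set $\{d_x-d_y\mid d\in D\}$ coming from the tuple $d$ — but $0$ arises in that difference set exactly $k$ times (each $d'\in D$ contributes $d'_x-d'_y$, and $0$ appears with multiplicity $k$ by the difference-set axiom counting the identity), so this is not immediately a contradiction. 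The cleaner route is: the $n-1$ injections $d\mapsto d_j$ ($j=2,\ldots,n$) together with the constant $d\mapsto d_1=0$ give a map $D\to G^n$ that is the identity on $D$; restrict attention to a single tuple $\bar d\in D$ and the translated set $D'=\{d-\bar d \mid d\in D\}$, which still has all the difference-set properties and contains $0=(0,\ldots,0)$. The element $0\in G$ must be distinct across the tuples in each coordinate $j\ge 2$, so no other tuple of $D'$ has a $0$ in coordinate $j$; since $D'$ has $k$ tuples and one of them is the all-zero tuple, the remaining $k-1$ tuples have nonzero entries in every coordinate $j\ge 2$. I would then count: across all $k-1$ nonzero tuples and all $n-1$ coordinates $j\ge 2$, we have $(k-1)(n-1)$ entries, and in each coordinate these are $k-1$ distinct \emph{nonzero} elements of $G$, hence at most $v-1$ of them, giving $k-1\le v-1$, which is true but too weak.

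The argument I actually expect to work is subtler and is where the real obstacle lies: after the $d_1=0$ normalization, one considers the diagonal entries of $\dev D$, or equivalently studies the $n$ maps $D\to G$ given by the coordinates simultaneously, and uses that the \emph{pairwise differences} $d_x-d_y$ sweep out difference sets to force a strong separation. Concretely, I would form the $k\times n$ array whose rows are the tuples of $D$ (first column all zero), and argue that its columns, viewed as functions $D\to G$, must be such that any two are ``in general position'' in a way that caps $n$ at $v$: if $n>v$, then among the $n$ coordinate-functions $D\to G$ (each a function on a $k$-element set) a pigeonhole forces two coordinates $x,y$ to agree on too large a subset of $D$, making $\{d_x-d_y\mid d\in D\}$ contain $0$ with multiplicity exceeding $\lambda$ — or, for $x,y$ both $\ge 2$, forcing $d_x=d_y$ on a subset larger than allowed, contradicting that this difference multiset is a genuine $(v,k,\lambda)$ difference set (where $0$ occurs with a controlled multiplicity). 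Making this pigeonhole precise — identifying exactly which structural feature of difference sets is violated when $n>v$, and getting the clean bound $n\le v$ rather than something off by a constant — is the main obstacle, and I would resolve it by pinning down the normalization so that one column (beyond the first) is forced to take the value $0$ on exactly one row, then chaining these ``position of the zero'' data across columns to produce, when $n>v$, a repeated configuration that violates the count of zero-differences permitted by the $(v,k,\lambda)$ parameters.
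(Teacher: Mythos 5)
Your proposal does not close; it circles the correct idea and then explicitly abandons it. The decisive step in the paper's proof is exactly the one you dismiss. Normalize so that the all-zero tuple lies in $D$ (the paper does this by translating one coordinate position at a time, replacing $d_x$ by $g+d_x$ for all $d\in D$, which manifestly turns each difference set $\{d_x-d_y\mid d\in D\}$ into a left translate and hence again a difference set — a cleaner move than your per-tuple translation $d\mapsto d-\bar d$, which in a non-abelian $G$ does not obviously preserve the coordinate-difference structure). Then for any \emph{other} tuple $d\in D$ and any pair of coordinates $x<y$, the equality $d_x=d_y$ is immediately fatal: the collection $\{d'_x-d'_y\mid d'\in D\}$ is required to be a $(v,k,\lambda)$ difference set, i.e.\ a $k$-\emph{subset} of $G$, so the $k$ values $d'_x-d'_y$ indexed by $d'\in D$ must be pairwise distinct; but both the zero tuple and $d$ would contribute the value $0$, leaving fewer than $k$ elements. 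Hence every nonzero tuple has $n$ pairwise distinct coordinates and $n\le v$ follows in one line. Your remark that ``$0$ appears with multiplicity $k$ by the difference-set axiom, so this is not immediately a contradiction'' conflates two different things: the $k$ trivial representations $a-a$ of $0$ are internal differences \emph{of} the $k$-subset $E=\{d'_x-d'_y\}$, whereas what is at issue is the injectivity of the indexed family $d'\mapsto d'_x-d'_y$, which is forced by $|E|=k$. You in fact used precisely this injectivity for the pairs $(1,j)$ to conclude the coordinate maps are injections, but then failed to apply it to pairs $(x,y)$ with $x,y\ge 2$ within a single row.

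Because of this, you end up counting down columns (distinct entries per coordinate), which can only ever yield $k-1\le v-1$, instead of counting along a single row (distinct entries per tuple), which yields $n\le v$. The final paragraph of your proposal, proposing a pigeonhole over coordinate functions for $n>v$, is left as an acknowledged ``main obstacle'' and is not needed: no pigeonhole, no chaining of zero positions, and no control of multiplicities of $0$ beyond the trivial set-versus-multiset observation is required. As written, the proposal is a plan with the essential lemma missing, not a proof.
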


\begin{proof}
For a group element $g\in G$ and an $n$-tuple $d=(d_1,\ldots,d_n)\in G^n$,
denote by $g+_x d=(d_1,\ldots,d_{x-1},g+d_x,d_{x+1},\ldots,d_n)$. We claim
that $D'=\{g+_x d\mid d\in D\}$ is also an $n$-dimensional $(v,k,\lambda)$
difference set. For any other index $y\in \{1,\ldots,n\}$, the
set $\{d_x-d_y \mid d\in D'\}=\{g+d_x-d_y \mid d\in D\}$ is a left translate
of the difference set $\{d_x-d_y\mid d\in D\}$, hence also a $(v,k,\lambda)$
difference set. By repeated application of this transformation, we can make
an $n$-dimensional difference set~$D'$ containing the $n$-tuple $(0,\ldots,0)$.
Now any other $n$-tuple $d\in D'$ must have distinct coordinates, because
$d_x=d_y$ would imply that the set of differences $\{d_x-d_y \mid d\in D'\}$
contains fewer than~$k$ elements of~$G$. Therefore, the number of
coordinates~$n$ cannot exceed $v=|G|$.
\end{proof}

The normalization of $n$-dimensional difference sets from~\cite{KR24}
does not change the development~$\dev D$. The new normalization
can change the development, but $\dev D$ and $\dev D'$ are always isotopic.
In~\cite[Propositions~3.5 and~3.7]{KR24}, $\P$-cubes coming from difference
sets were characterized as having an autotopy group acting regularly on the
coordinates. Theorem~\ref{class731} shows that the bound $n\le v$ does not
hold for general $\P^n(v,k,\lambda)$-cubes.

The largest integer~$n$ such that an $n$-dimensional $(v,k,\lambda)$ difference
set in~$G$ exists was denoted by $\mu_G(v,k,\lambda)$ in~\cite{KR24}. To determine
values of this function, a computer classification of small $n$-dimensional
difference sets was performed using the library of difference sets available in
the GAP package \emph{DifSets}~\cite{DP19}. In~\cite{KR24}, calculations were
performed in GAP. Exact values of $\mu_G(16,6,2)$ were determined
in~\cite[Table 3]{KR24} for $10$ of the $14$ groups of order~$16$, and lower
bounds were given for the remaining $4$ groups.

We have implemented the classification algorithm in the C programming language
and determined more exact values of~$\mu$ and better lower bounds.
Detailed results of our calculations are presented in Table~\ref{tab4}. The newly
established values of~$\mu$ are summarized in Theorem~\ref{newmuval}. When there
is only one group of order~$v$ up to isomorphism we write $\mu(v,k,\lambda)$, and
when there are several groups we identify them by their ID in the GAP library of
small groups~\cite{GAP}.

\begin{theorem}\label{newmuval}
For groups of order~$16$ with IDs $10$ and $13$ the values of $\mu_G(16,6,2)$
are $4$ and $8$. For groups with IDs $2$, $3$, $4$, $5$, $6$, and $8$ the
values of $\mu_G(16,10,6)$ are $6$, $6$, $6$, $5$, $4$, and $6$,
respectively. Furthermore, $\mu(13,9,6)=13$, $\mu(19,9,4)=\mu(19,10,5)=19$,
$\mu(23,11,5)=\mu(23,12,6)=23$, and $\mu(31,6,1)=31$.
\end{theorem}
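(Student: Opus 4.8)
The plan is to prove Theorem~\ref{newmuval} in two distinct ways, matching its two distinct flavors. The assertions about the groups of order~$16$ and about $\mu(13,9,6)$ are finite facts established by the classification algorithm; for these I would simply invoke the same dimension-increasing computation used in the proofs of Propositions~\ref{v11p5}--\ref{v11p2} and summarized in Table~\ref{tab4}: start from the $2$-dimensional difference sets (the ``ordinary'' difference sets taken from the \emph{DifSets} library), extend by one coordinate at a time respecting the defining property that every pair of coordinates yields a $(v,k,\lambda)$ difference set, perform isomorph rejection using nauty/Traces, and read off the largest~$n$ for which a nonempty list survives. Because the development $\dev D$ depends only on~$D$ and a one-coordinate deletion sends an $n$-dimensional difference set to an $(n-1)$-dimensional one, this search is exhaustive and the stated values $\mu_G(16,6,2)\in\{4,8\}$, $\mu_G(16,10,6)\in\{6,6,6,5,4,6\}$ and $\mu(13,9,6)=13$ follow. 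Complementation halves the work for the $(16,10,6)$ parameters via the bijection with $(16,6,2)$ difference sets.

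For the remaining equalities — $\mu(19,9,4)=\mu(19,10,5)=19$, $\mu(23,11,5)=\mu(23,12,6)=23$, and $\mu(31,6,1)=31$ — the upper bound is immediate from Theorem~\ref{dsbound} ($n\le v$), so the real content is \emph{existence} of an $n$-dimensional difference set of dimension exactly~$v$. Here the natural strategy is to appeal to Theorem~\ref{tmelab}: $v=19$, $23$, $31$ are primes, so the only group of order~$v$ is the elementary abelian (cyclic) group~$\Z_v$, and each of the parameter sets $(19,9,4)$, $(19,10,5)$, $(23,11,5)$, $(23,12,6)$, $(31,6,1)$ admits a cyclic difference set — the quadratic-residue (Paley) construction for $v=19,23$, and the classical planar Singer difference set for $(31,6,1)$. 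Since difference sets exist in these elementary abelian groups, Theorem~\ref{tmelab} gives $\mu_G(v,k,\lambda)=v$ directly, and hence $\mu(v,k,\lambda)=v$ as there is only one group of that order. The same observation via Corollary~\ref{cycdim} already forces dimension at least~$v$ when~$v$ is prime, so either result closes the gap.

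The main obstacle, such as it is, is bookkeeping rather than mathematics: one must verify that a $(v,k,\lambda)$ difference set genuinely exists in each listed case before invoking Theorem~\ref{tmelab}. For $(19,9,4)$ and $(19,10,5)$ this is the pair of complementary Paley difference sets (quadratic residues and nonresidues in $\F_{19}$, since $19\equiv 3\pmod 4$); for $(23,11,5)$ and $(23,12,6)$ likewise in $\F_{23}$; for $(31,6,1)$ it is the Singer difference set of the projective plane $PG(2,5)$. Once existence is checked, the theorem is an immediate corollary of Theorems~\ref{dsbound} and~\ref{tmelab}. For the order-$16$ statements the only subtlety is confirming that the computation terminated — i.e.\ that for the claimed maximal dimension no cube extends to the next dimension — which is precisely what the exhaustive backtracking with isomorph rejection certifies, exactly as in Section~\ref{sec3}.
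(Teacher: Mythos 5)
Your proposal is essentially sound, but it splits the theorem differently from the paper. The paper proves \emph{all} of Theorem~\ref{newmuval} by the same exhaustive dimension-increasing classification whose output is Table~\ref{tab4}: every value of $\mu_G$, including $\mu(19,9,4)=19$, $\mu(23,11,5)=23$, and $\mu(31,6,1)=31$, is read off as the largest~$n$ with a nonzero count, the exhaustiveness being guaranteed (as you correctly note) by the fact that deleting a coordinate of an $n$-dimensional difference set yields an $(n-1)$-dimensional one. Your alternative route for the prime orders --- upper bound from Theorem~\ref{dsbound}, lower bound from Theorem~\ref{tmelab} (or Corollary~\ref{cycdim}) applied to the Paley and Singer difference sets in $\Z_{19}$, $\Z_{23}$, $\Z_{31}$ --- is valid and cleaner for establishing the bare values of~$\mu$, since it avoids the heavy computation; the only caveat is an ordering one, as Theorem~\ref{tmelab} appears after Theorem~\ref{newmuval} in the paper, but its proof is independent, so no circularity arises. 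The same argument would in fact also dispose of $\mu(13,9,6)=13$, which you instead attribute to the computation; either is fine. What the paper's uniform computational approach buys, beyond the values of~$\mu$, is the full census of inequivalent cubes in every dimension, which feeds the observations of Section~\ref{sec5}.

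One concrete error: your remark that ``complementation halves the work for the $(16,10,6)$ parameters via the bijection with $(16,6,2)$ difference sets'' is false in dimensions $n\ge 3$. Complementation is a bijection only between the \emph{ordinary} $(16,6,2)$ and $(16,10,6)$ difference sets; there is no complementation for higher-dimensional difference sets or $\P$-cubes (Proposition~\ref{nocompl}), and indeed Table~\ref{tab4} shows that the counts and even the values of $\mu_G$ differ for the complementary parameter pairs (e.g.\ $\mu_G(16,6,2)=4$ but $\mu_G(16,10,6)=6$ for the group with ID~$2$). The $(16,10,6)$ classifications must be run independently. Since this was only an efficiency remark and not a load-bearing step, the rest of your argument stands.
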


\begin{landscape}

\begin{table}[!h]
\vskip 9mm
\begin{tabular}{|cc|cccccccccccccccccc|}
\hline
 & & \multicolumn{18}{c|}{$n$} \\[1mm]
$G$ & $(v,k,\lambda)$ & 2 & 3 & 4 & 5 & 6 & 7 & 8 & 9 & 10 & 11 & 12 & 13 & 14 & 15 & 16 & 17 & 18 & 19 \\[1mm]
\hline
\hline
\multirow{2}{*}{$\Z_7$} & \rule{0mm}{4.5mm}$(7,3,1)$ & 1 & 2 & 2 & 1 & 1 & 1 & & & & & & & & & & & & \\[1mm]
 & $(7,4,2)$ & 1 & 2 & 2 & 1 & 1 & 1 & & & & & & & & & & & & \\[0.5mm]
\hline
\multirow{2}{*}{$\Z_{11}$} & \rule{0mm}{4.5mm}$(11,5,2)$ & 1 & 2 & 4 & 6 & 6 & 4 & 2 & 1 & 1 & 1 & & & & & & & & \\[1mm]
 & $(11,6,3)$ & 1 & 2 & 4 & 6 & 6 & 4 & 2 & 1 & 1 & 1 & & & & & & & & \\[0.5mm]
\hline
\multirow{2}{*}{$\Z_{13}$} & \rule{0mm}{4.5mm}$(13,4,1)$ & 1 & 3 & 7 & 10 & 14 & 14 & 10 & 7 & 3 & 1 & 1 & 1 & & & & & & \\[1mm]
 & $(13,9,6)$ & 1 & 146 & 422 & 652 & 305 & 60 & 13 & 8 & 3 & 1 & 1 & 1 & & & & & & \\[0.5mm]
\hline
\multirow{2}{*}{$\Z_{15}$} & \rule{0mm}{4.5mm}$(15,7,3)$ & 1 & 3 & 0 & 0 & 0 & 0 & 0 & 0 & 0 & 0 & 0 & 0 & 0 & 0 & & & & \\[1mm]
 & $(15,8,4)$ & 1 & 6 & 1 & 0 & 0 & 0 & 0 & 0 & 0 & 0 & 0 & 0 & 0 & 0 & & & & \\[0.5mm]
\hline
\multirow{2}{*}{ID2} & \rule{0mm}{4.5mm}$(16,6,2)$ & 1 & 31 & 81 & 0 & 0 & 0 & 0 & 0 & 0 & 0 & 0 & 0 & 0 & 0 & 0 & & & \\[1mm]
 & $(16,10,6)$ & 1 & 2565 & 152314 & 12115 & 36 & 0 & 0 & 0 & 0 & 0 & 0 & 0 & 0 & 0 & 0 & & & \\[0.5mm]
\hline
\multirow{2}{*}{ID3} & \rule{0mm}{4.5mm}$(16,6,2)$ & 1 & 16 & 55 & 0 & 0 & 0 & 0 & 0 & 0 & 0 & 0 & 0 & 0 & 0 & 0 & & & \\[1mm]
 & $(16,10,6)$ & 1 & 6638 & 462880 & 111294 & 196 & 0 & 0 & 0 & 0 & 0 & 0 & 0 & 0 & 0 & 0 & & & \\[0.5mm]
\hline
\multirow{2}{*}{ID4} & \rule{0mm}{4.5mm}$(16,6,2)$ & 1 & 38 & 113 & 0 & 0 & 0 & 0 & 0 & 0 & 0 & 0 & 0 & 0 & 0 & 0 & & & \\[1mm]
 & $(16,10,6)$ & 1 & 6516 & 389060 & 34076 & 53 & 0 & 0 & 0 & 0 & 0 & 0 & 0 & 0 & 0 & 0 & & & \\[0.5mm]
\hline
\end{tabular}
\vskip 5mm
\caption{Numbers of inequivalent $\P^n(v,k,\lambda)$-cubes obtained from difference sets.}\label{tab4}
\end{table}

\addtocounter{table}{-1}
\setlength{\tabcolsep}{3.8pt}

\begin{table}[!h]
\vskip 9mm
\begin{tabular}{|cc|cccccccccccccccccc|}
\hline
 & & \multicolumn{18}{c|}{$n$} \\[1mm]
$G$ & $(v,k,\lambda)$ & 2 & 3 & 4 & 5 & 6 & 7 & 8 & 9 & 10 & 11 & 12 & 13 & 14 & 15 & 16 & 17 & 18 & 19 \\[1mm]
\hline
\hline
\multirow{2}{*}{ID5} & \rule{0mm}{4.5mm}$(16,6,2)$ & 2 & 56 & 140 & 0 & 0 & 0 & 0 & 0 & 0 & 0 & 0 & 0 & 0 & 0 & 0 & & & \\[1mm]
 & $(16,10,6)$ & 2 & 10680 & 323520 & 6874 & 0 & 0 & 0 & 0 & 0 & 0 & 0 & 0 & 0 & 0 & 0 & & & \\[0.5mm]
\hline
\multirow{2}{*}{ID6} & \rule{0mm}{4.5mm}$(16,6,2)$ & 1 & 8 & 6 & 0 & 0 & 0 & 0 & 0 & 0 & 0 & 0 & 0 & 0 & 0 & 0 & & & \\[1mm]
 & $(16,10,6)$ & 1 & 506 & 1192 & 0 & 0 & 0 & 0 & 0 & 0 & 0 & 0 & 0 & 0 & 0 & 0 & & & \\[0.5mm]
\hline
\multirow{2}{*}{ID8} & \rule{0mm}{4.5mm}$(16,6,2)$ & 1 & 18 & 44 & 0 & 0 & 0 & 0 & 0 & 0 & 0 & 0 & 0 & 0 & 0 & 0 & & & \\[1mm]
 & $(16,10,6)$ & 1 & 3746 & 76580 & 5444 & 8 & 0 & 0 & 0 & 0 & 0 & 0 & 0 & 0 & 0 & 0 & & & \\[0.5mm]
\hline
ID9 & \rule{0mm}{4.5mm}$(16,6,2)$ & 1 & 38 & 112 & 0 & 0 & 0 & 0 & 0 & 0 & 0 & 0 & 0 & 0 & 0 & 0 & & & \\[0.5mm]
\hline
ID10 & \rule{0mm}{4.5mm}$(16,6,2)$ & 1 & 86 & 1941 & 0 & 0 & 0 & 0 & 0 & 0 & 0 & 0 & 0 & 0 & 0 & 0 & & & \\[0.5mm]
\hline
ID11 & \rule{0mm}{4.5mm}$(16,6,2)$ & 1 & 24 & 88 & 0 & 0 & 0 & 0 & 0 & 0 & 0 & 0 & 0 & 0 & 0 & 0 & & & \\[0.5mm]
\hline
ID13 & \rule{0mm}{4.5mm}$(16,6,2)$ & 2 & 129 & 4960 & 19734 & 8106 & 374 & 2 & 0 & 0 & 0 & 0 & 0 & 0 & 0 & 0 & & & \\[0.5mm]
\hline
\multirow{2}{*}{$\Z_{19}$} & \rule{0mm}{4.5mm}$(19,9,4)$ & 1 & 8 & 14 & 36 & 86 & 154 & 228 & 280 & 280 & 228 & 154 & 86 & 36 & 14 & 4 & 1 & 1 & 1 \\[1mm]
 & $(19,10,5)$ & 1 & 8 & 14 & 36 & 86 & 154 & 228 & 280 & 280 & 228 & 154 & 86 & 36 & 14 & 4 & 1 & 1 & 1 \\[0.5mm]
\hline
$\Z_{21}$ & \rule{0mm}{4.5mm}$(21,5,1)$ & 1 & 2 & 0 & 0 & 0 & 0 & 0 & 0 & 0 & 0 & 0 & 0 & 0 & 0 & 0 & 0 & 0 & $\cdots$ \\[0.5mm]
\hline
$F_{21}$ & \rule{0mm}{4.5mm}$(21,5,1)$ & 1 & 6 & 0 & 0 & 0 & 0 & 0 & 0 & 0 & 0 & 0 & 0 & 0 & 0 & 0 & 0 & 0 & $\cdots$ \\[0.5mm]
\hline
\end{tabular}
\vskip 5mm
\caption{Numbers of inequivalent $\P^n(v,k,\lambda)$-cubes obtained from difference sets (continued).}
\end{table}

\clearpage

\end{landscape}

\addtocounter{table}{-1}

\begin{table}[!t]
\begin{tabular}{|cc|cccccccccc|}
\hline
 & & \multicolumn{10}{c|}{$n$} \\[1mm]
$G$ & $(v,k,\lambda)$ & 2 & 3 & 4 & 5 & 6 & 7 & 8 & 9 & 10 & 11 \\[1mm]
\hline
\hline
\multirow{2}{*}{$\Z_{23}$} & \rule{0mm}{4.5mm}$(23,11,5)$ & 1 & 11 & 20 & 69 & 207 & 492 & 984 & 1630 & 2282 & 2694 \\[1mm]
 & $(23,12,6)$ & 1 & 11 & 20 & 69 & 207 & 492 & 984 & 1630 & 2282 & 2694 \\[0.5mm]
\hline
$\Z_{31}$ & \rule{0mm}{4.5mm}$(31,6,1)$ & 1 & 10 & 49 & 195 & 812 & \raisebox{0.25pt}{\footnotesize 2846} &
\raisebox{0.25pt}{\footnotesize 8528} & \raisebox{0.5pt}{\scriptsize 21731} &
\raisebox{0.5pt}{\scriptsize 47801} & \raisebox{0.5pt}{\scriptsize 91148} \\[0.5mm]
\hline
\end{tabular}
\vskip 2mm \setlength{\tabcolsep}{4.6pt}
\begin{tabular}{|cc|cccccccc|}
\hline
 & & \multicolumn{8}{c|}{$n$} \\[1mm]
$G$ & $(v,k,\lambda)$ & 12 & 13 & 14 & 15 & 16 & 17 & 18 & 19 \\[1mm]
\hline
\hline
\multirow{2}{*}{$\Z_{23}$} & \rule{0mm}{4.5mm}$(23,11,5)$ & 2694 & 2282 & 1630 & 984 & 492 & 207 & 69 & 20 \\[1mm]
 & $(23,12,6)$ & 2694 & 2282 & 1630 & 984 & 492 & 207 & 69 & 20 \\[0.5mm]
\hline
$\Z_{31}$ & \rule{0mm}{4.5mm}$(31,6,1)$ & \raisebox{0.85pt}{\tiny 151924} & \raisebox{0.85pt}{\tiny 221959} &
\raisebox{0.85pt}{\tiny 285357} & \raisebox{0.85pt}{\tiny 323396} & \raisebox{0.85pt}{\tiny 323396} &
\raisebox{0.85pt}{\tiny 285357} & \raisebox{0.85pt}{\tiny 221959} & \raisebox{0.85pt}{\tiny 151924} \\[0.5mm]
\hline
\end{tabular}
\vskip 2mm \setlength{\tabcolsep}{3.3pt}
\begin{tabular}{|cc|cccccccccccc|}
\hline
 & & \multicolumn{12}{c|}{$n$} \\[1mm]
$G$ & $(v,k,\lambda)$ & 20 & 21 & 22 & 23 & 24 & 25 & 26 & 27 & 28 & 29 & 30 & 31 \\[1mm]
\hline
\hline
\multirow{2}{*}{$\Z_{23}$} & \rule{0mm}{4.5mm}$(23,11,5)$ & 4 & 1 & 1 & 1 & & & & & & & & \\[1mm]
 & $(23,12,6)$ & 4 & 1 & 1 & 1 & & & & & & & & \\[0.5mm]
\hline
$\Z_{31}$ & \rule{0mm}{4.5mm}$(31,6,1)$ & \raisebox{0.5pt}{\scriptsize 91148} & \raisebox{0.5pt}{\scriptsize 47801} &
\raisebox{0.5pt}{\scriptsize 21731} & \raisebox{0.25pt}{\footnotesize 8528} & \raisebox{0.25pt}{\footnotesize 2846} &
811 & 187 & 38 & 6 & 1 & 1 & 1 \\[0.5mm]
\hline
\end{tabular}
\vskip 4mm
\caption{Numbers of inequivalent $\P^n(v,k,\lambda)$-cubes obtained from difference sets (continued).}
\end{table}

The previously known values of~$\mu$ are given in \cite[Tables 2 and 3]{KR24}
and can also be read from Table~\ref{tab4}. The groups of order $16$ with
IDs $1$, $7$, $12$, and $14$ are missing from Table~\ref{tab4}. The former
two groups are the cyclic group $\Z_{16}$ and the dihedral group $D_{16}$,
which do not contain difference sets. For the latter two groups we could not
completely classify $n$-dimensional difference sets due to large numbers
of inequivalent $\P$-cubes arising from them. Example~\ref{exid12} establishes
an improved lower bound $\mu_G(16,6,2)\ge 11$ for the group with ID $12$. The
group with ID $14$ is the elementary abelian group $\Z_2^4$, and will be dealt
with in Corollary~\ref{mu16elab}.

\allowdisplaybreaks

\begin{example}\label{exid12}
The group of order $16$ with GAP ID $12$ is isomorphic to $G=\Z_2\times Q_8$,
where $Q_8$ is the quaternion group. If elements of the factor groups are $\Z_2=\{0,1\}$
and $Q_8=\{1,i,j,k,-1,-i,-j,-k\}$, the following is a $11$-dimensional $(16,6,2)$
difference set in~$G$:
{\footnotesize \begin{align*}
\{\, & ( (0,1), (0,1), (0,1), (0,j), (0,i), (0,j), (0,k), (1,1), (1,i), (0,1), (1,-i) ),\\[0.3mm]
   & ( (0,1), (0,j), (1,1), (0,1), (0,-i), (1,1), (0,j), (0,k), (1,1), (1,i), (1,-j) ),\\[0.3mm]
   & ( (0,1), (0,i), (0,j), (1,1), (1,-j), (0,i), (1,1), (0,j), (0,-1), (1,1), (1,i) ),\\[0.3mm]
   & ( (0,1), (1,1), (0,i), (1,-k), (0,1), (0,-i), (0,-1), (0,-1), (0,j), (1,-i), (0,j) ),\\[0.3mm]
   & ( (0,1), (0,-1), (1,-k), (0,-1), (1,1), (0,1), (0,1), (1,i), (0,k), (1,-j), (0,1) ),\\[0.3mm]
   & ( (0,1), (1,-k), (0,-1), (0,i), (0,j), (1,-j), (1,i), (0,1), (0,1), (0,j), (1,1) )\, \}.
\end{align*}}
\end{example}

We see that equality is reached quite
often in the upper bound $\mu_G(v,k,\lambda)\le v$ of Theorem~\ref{dsbound}.
For parameters $(q,(q-1)/2,(q-3)/4)$, this is explained by the construction
of $q$-dimensional Paley difference sets in the additive group of $\F_q$,
$q\equiv 3 \pmod{4}$; see \cite[Theorem~4.1]{KR24}. It was noted that the same
construction works for cyclotomic difference sets ($4$th and $8$th powers in~$\F_q$
for appropriate orders~$q$), but this does not explain the equality $\mu(13,4,1)=
\mu(13,9,6)=13$. We will now generalize this construction to difference sets with
arbitrary parameters in elementary abelian groups. The key observation in the
proof of the following theorem is that group automorphisms $\varphi:G\to G$
preserve difference sets in~$G$.

\begin{theorem}\label{tmelab}
Let $G$ be an elementary abelian group, i.e.\ the additive group of
a finite field~$\F_q$. Then any $(q,k,\lambda)$ difference set in~$G$
extends to dimension~$q$.
\end{theorem}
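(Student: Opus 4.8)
The plan is to write down an explicit $q$-dimensional difference set built from an ordinary one using the multiplicative structure of $\F_q$. First I would isolate the key fact announced just before the statement: if $\Delta\subseteq G$ is a $(v,k,\lambda)$ difference set and $\varphi\in\Aut(G)$, then $\varphi(\Delta)$ is again a $(v,k,\lambda)$ difference set. Indeed $\varphi(a)-\varphi(b)=\varphi(a-b)$, so for each $g\neq 0$ the number of pairs $a,b\in\Delta$ with $\varphi(a)-\varphi(b)=g$ equals the number of pairs with $a-b=\varphi^{-1}(g)$, which is $\lambda$; and $|\varphi(\Delta)|=k$. In our situation $G=(\F_q,+)$, and for every nonzero $c\in\F_q$ the dilation $m_c\colon g\mapsto cg$ is an automorphism of $G$, so $c\Delta=m_c(\Delta)$ is a $(q,k,\lambda)$ difference set whenever $\Delta$ is.

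Next I would index the $n=q$ coordinates by the elements of the field, and, starting from a given $(q,k,\lambda)$ difference set $\Delta\subseteq\F_q$, set
\[
D=\{\,(a\delta)_{a\in\F_q}\ \mid\ \delta\in\Delta\,\}\ \subseteq\ G^q,
\]
where $a\delta$ is the product in $\F_q$. This is a $k$-subset of $G^q$, since the assignment $\delta\mapsto(a\delta)_{a\in\F_q}$ is injective: its coordinate at $a=1$ recovers $\delta$.

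Then I would verify the defining property of an $n$-dimensional difference set. Fix two distinct coordinates $x,y\in\F_q$. For $d=(a\delta)_{a}\in D$ one has $d_x-d_y=x\delta-y\delta=(x-y)\delta$, hence
\[
\{\,d_x-d_y\ \mid\ d\in D\,\}=(x-y)\Delta,
\]
which is a dilation of $\Delta$ by the nonzero scalar $x-y$ and therefore a $(q,k,\lambda)$ difference set by the observation above; in particular it has exactly $k$ elements, so the $q$-tuples of $D$ are genuinely distinct when compared in coordinates $x$ and $y$. Thus $D$ is a $q$-dimensional $(q,k,\lambda)$ difference set, $\dev D$ is a cube in $\P^q(q,k,\lambda)$, and combined with Theorem~\ref{dsbound} this gives $\mu_G(q,k,\lambda)=q$ whenever $(q,k,\lambda)$ difference sets exist.

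I do not anticipate a real obstacle: the entire idea is to label coordinates by scalars so that the coordinate-difference $d_x-d_y$ is the automorphic image $(x-y)\Delta$ of the starting set. The only things needing a moment of care are the distinctness claims (that the $k$ tuples of $D$, and their pairwise coordinate differences, are distinct), which follow from injectivity of the dilations $m_c$, and the benign observation that the coordinate indexed by $0\in\F_q$ is identically $0$, in accordance with the zero-coordinate normalization of higher-dimensional difference sets discussed above.
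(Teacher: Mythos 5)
Your proposal is correct and is essentially the paper's own proof: the paper takes the vector $w=(0,1,\alpha,\ldots,\alpha^{q-2})$ with $\alpha$ primitive (i.e.\ an enumeration of $\F_q$) and forms $\vec D=\{d_1w,\ldots,d_kw\}$, which is exactly your construction with coordinates indexed by field elements, and it verifies the coordinate differences are dilations of $D$ by nonzero scalars, hence automorphic images and again difference sets. Your indexing by field elements is a slightly cleaner way to phrase the same argument, and your explicit attention to the injectivity/cardinality points is a welcome bit of extra care.
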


\begin{proof}
Let $D=\{d_1,\ldots,d_k\}\subseteq G$ be a $(q,k,\lambda)$ difference set,
$\alpha\in \F_q$ a primitive element, and $w=(0,1,\alpha,\alpha^2,\ldots,\alpha^{q-2}) \in G^q$.
An extension of~$D$ to dimension~$q$ is $\vec D = \{d_1 w,\ldots, d_k w\}$, where each
coordinate of~$w$ is multiplied by an element of~$D$. To prove that $\vec D$ is
indeed a $q$-dimensional $(q,k,\lambda)$ difference set, we have
to check that the differences of any two coordinates are difference sets
in~$G$. The first coordinate of any vector in $\vec D$ is $0$, and if we
subtract the coordinate with $\alpha^y$ in~$w$, we get
$\{-\alpha^y d_1,\ldots,-\alpha^y d_k\}$. This is a $(q,k,\lambda)$
difference set in~$G$ because it is the image of~$D$ by the group
automorphism $\varphi:G\to G$, $\varphi(g)=-\alpha^y g$. Next, we
subtract two non-zero coordinates, say with~$\alpha^x$ and~$\alpha^y$
in~$w$ for some $x<y$. We get the set
$$\{\alpha^x (1-\alpha^{y-x})d_1,\ldots,\alpha^x (1-\alpha^{y-x})d_k\}.$$
This is again the image of~$D$ by a group automorphism $\psi:G\to G$,
$\psi(g)=\alpha^x(1-\alpha^{y-x})g$.
\end{proof}

Two missing values of $\mu_G$ follow from Theorem~\ref{tmelab}.

\begin{corollary}\label{mu16elab}
For $G=\Z_2^4$, $\mu_G(16,6,2)=\mu_G(16,10,6)=16$.
\end{corollary}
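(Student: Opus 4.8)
The statement is an immediate consequence of Theorems~\ref{dsbound} and~\ref{tmelab}, once the existence of suitable (ordinary) difference sets in $G=\Z_2^4$ is recorded. I would begin by observing that $\Z_2^4$ is the additive group of the field~$\F_{16}$, so the hypothesis of Theorem~\ref{tmelab} is met with $q=16$: an ordinary $(16,k,\lambda)$ difference set in~$G$ extends to a $16$-dimensional one, giving $\mu_G(16,k,\lambda)\ge 16$. The reverse inequality $\mu_G\le 16$ is exactly the bound $n\le v$ of Theorem~\ref{dsbound}. Thus everything reduces to exhibiting a $(16,6,2)$ difference set and a $(16,10,6)$ difference set in $\Z_2^4$.

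For the first, I would invoke the classical Hadamard (Menon) construction: if $Q:\F_2^4\to\F_2$ is a nondegenerate quadratic form of hyperbolic (``$+$'') type, then $D=\{x\in\F_2^4 \mid Q(x)=1\}$ has $|D|=2^{3}-2^{1}=6$ elements and is a $(16,6,2)$ difference set -- these being the parameters $(4N^2,\,2N^2-N,\,N^2-N)$ with $N=2$. For the second, I would take the complement $G\setminus D=\{x\in\F_2^4 \mid Q(x)=0\}$, which has $16-6=10$ elements; the complement of any $(v,k,\lambda)$ difference set is a $(v,\,v-k,\,v-2k+\lambda)$ difference set, so here one obtains a $(16,\,16-6,\,16-12+2)=(16,10,6)$ difference set.

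It then remains to assemble the pieces: Theorem~\ref{tmelab} applied to these two difference sets yields $16$-dimensional $(16,6,2)$ and $(16,10,6)$ difference sets in~$G$, hence $\mu_G(16,6,2)\ge 16$ and $\mu_G(16,10,6)\ge 16$, while Theorem~\ref{dsbound} gives $\mu_G(16,6,2)\le 16$ and $\mu_G(16,10,6)\le 16$; equality follows in both cases. There is no genuine obstacle here -- the argument is a direct corollary -- and the only point deserving a moment's attention is that a $(16,6,2)$ difference set really does live in the elementary abelian group $\Z_2^4$ (and not merely in some other group of order~$16$), which is ensured by the quadratic-form construction above, or alternatively can be read off from the \emph{DifSets}~\cite{DP19} library used elsewhere in this section.
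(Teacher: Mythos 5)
Your proposal is correct and follows the same route the paper intends: the lower bound $\mu_G\ge 16$ from Theorem~\ref{tmelab} applied to the additive group of $\F_{16}$, the upper bound from Theorem~\ref{dsbound}, and the (well-known) existence of $(16,6,2)$ and $(16,10,6)$ difference sets in $\Z_2^4$, which the paper takes for granted but you usefully make explicit via the quadratic-form construction and complementation.
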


A further generalization of Theorem~\ref{tmelab} applies to arbitrary groups~$G$.
An \emph{antiautomorphism} of~$G$ is a bijection $\varphi:G\to G$ such that
$$\varphi(g+h)=\varphi(h)+\varphi(g),\kern 3mm \forall g,h\in G.$$
If~$G$ is abelian, automorphisms and anti\-auto\-morphisms coincide.
Antiautomorphisms of non-abelian groups are the functions $-\varphi$,
where~$\varphi$ is an automorphism. Thus, antiautomorphisms of~$G$ also
map $(v,k,\lambda)$ difference set~$D\subseteq G$ to difference sets
with the same parameters.

\begin{definition}
We say that $R=\{\varphi_1,\ldots,\varphi_{n-1}\}$ is a \emph{regular
set of (anti)automorphisms} of~$G$ if each $\varphi_x:G\to G$ is an
automorphism or antiautomorphism, and each difference $\varphi_x-\varphi_y$
is an automorphism or antiautomorphism for $1\le x<y\le n-1$.
\end{definition}

Given such a set~$R$ of size $n-1$ and an element $d\in D$,
denote by $\vec d$ the $n$-tuple $(0,\varphi_1(d),\ldots,\varphi_{n-1}(d))\in G^n$.
Then the set $\vec D = \{\vec d \mid d\in D\}$ is an $n$-dimensional
$(v,k,\lambda)$ difference set. This proves the following theorem.

\begin{theorem}\label{tmreg}
If a group $G$ allows a regular set of (anti)automorphisms of size~$n-1$,
then any $(v,k,\lambda)$ difference set in~$G$ extends to dimension~$n$.
\end{theorem}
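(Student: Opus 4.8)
The plan is to verify directly that the set $\vec D=\{\vec d\mid d\in D\}\subseteq G^n$ meets the definition of an $n$-dimensional $(v,k,\lambda)$ difference set, i.e. that for every pair of coordinates $0\le x<y\le n-1$ (writing the coordinates of $\vec d$ as $\vec d_0=0$, $\vec d_x=\varphi_x(d)$ for $x\ge 1$) the set of coordinate differences $\{\vec d_x-\vec d_y\mid d\in D\}$ is an ordinary $(v,k,\lambda)$ difference set in $G$. First I would observe that $|\vec D|=k$: the map $d\mapsto\vec d$ is injective because already the coordinate $d\mapsto\varphi_1(d)$ is a bijection, so no two elements of $D$ collapse.

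The core of the argument is a short case analysis on the pair $(x,y)$. If $x=0$, then $\vec d_0-\vec d_y=-\varphi_y(d)$, and since $\varphi_y$ is an automorphism or antiautomorphism of $G$, so is $g\mapsto-\varphi_y(g)$ (for automorphisms this is the preceding remark that antiautomorphisms are exactly the maps $-\varphi$ with $\varphi$ an automorphism; for antiautomorphisms, $-\varphi_y$ is then an automorphism). Hence $\{-\varphi_y(d)\mid d\in D\}$ is the image of $D$ under an (anti)automorphism and is therefore a $(v,k,\lambda)$ difference set, by the remark preceding the definition. If instead $1\le x<y$, then $\vec d_x-\vec d_y=\varphi_x(d)-\varphi_y(d)=(\varphi_x-\varphi_y)(d)$, and by the definition of a regular set $\varphi_x-\varphi_y$ is an (anti)automorphism of $G$, so again $\{(\varphi_x-\varphi_y)(d)\mid d\in D\}$ is a $(v,k,\lambda)$ difference set. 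Since these are all the pairs, $\vec D$ satisfies \cite[Definition~3.1]{KR24}, and the development $\dev\vec D$ is an OA-representation of a cube in $\P^n(v,k,\lambda)$ by \cite[Proposition~3.2]{KR24}; in particular the original difference set extends to dimension~$n$.

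The only genuinely delicate point is the bookkeeping about \emph{why} the pointwise negative of an (anti)automorphism is again an (anti)automorphism, and that $-\varphi$ applied to a difference set still yields a difference set in the non-abelian case. I would spell this out once at the start so the case analysis reads uniformly. Concretely, if $\varphi$ is an automorphism then $(-\varphi)(g+h)=-(\varphi(g)+\varphi(h))=-\varphi(h)-\varphi(g)=(-\varphi)(h)+(-\varphi)(g)$, so $-\varphi$ is an antiautomorphism; the symmetric computation handles the case where $\varphi$ is an antiautomorphism. For difference sets: if $\varphi$ is an automorphism, $\varphi(D)$ is a difference set by the usual argument, and $-\varphi(D)=\{-a\mid a\in\varphi(D)\}$ is the image of $\varphi(D)$ under inversion $g\mapsto-g$, which is an antiautomorphism, so by the paragraph preceding the definition (antiautomorphisms map difference sets to difference sets) $-\varphi(D)$ is again a $(v,k,\lambda)$ difference set. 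Everything else is immediate substitution into \cite[Definition~3.1]{KR24}, so the proof is essentially a one-paragraph verification once this preliminary is in place.
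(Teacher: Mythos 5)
Your proposal is correct and follows exactly the paper's route: the paper's proof consists of defining $\vec d=(0,\varphi_1(d),\ldots,\varphi_{n-1}(d))$ and asserting that $\vec D=\{\vec d\mid d\in D\}$ is an $n$-dimensional difference set, which is precisely the verification you carry out (your case analysis on coordinate pairs and the observation that $-\varphi$ is again an (anti)automorphism are the implicit details the paper leaves to the reader). No discrepancies to report.
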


Theorem~\ref{tmelab} can be seen as a special case of Theorem~\ref{tmreg}
by using multiplication in $\F_q$. The functions $\varphi_x:\F_q\to \F_q$,
$\varphi_x(g)=\alpha^x g$ for $x=0,\ldots,q-2$ constitute a regular set of
automorphisms of the additive group, yielding the construction
of Theorem~\ref{tmelab}. Kerdock sets provide more examples of regular sets
in elementary abelian $2$-groups \cite{CS73, DG75, WMK82a, WMK82b, AMK72}
and $3$-groups \cite{NJP76}.

Theorem~\ref{tmreg} also applies to groups that are not
elementary abelian. For example, $\varphi_1(g)=g$ and
$\varphi_2(g)=2g$ are automorphisms of the group~$\Z_{15}$. The
difference $(\varphi_1-\varphi_2)(g)=-g$ is an auto\-morphism
as well. Thus, $\{\varphi_1,\varphi_2\}$ is a regular set and all
$(15,7,3)$ and $(15,8,4)$ difference sets extend at least to
dimension~$3$. In Table~\ref{tab4}, we see that there is a
$4$-dimensional $(15,8,4)$ difference set, but it cannot be
obtained from Theorem~\ref{tmreg} because~$\Z_{15}$ does not
allow regular sets of size~$3$. The next result applies to
difference sets in arbitrary cyclic groups~$\Z_v$.

\begin{corollary}\label{cycdim}
Let $p$ be the smallest prime divisor of $v$. Then any cyclic
$(v,k,\lambda)$ difference set extends at least to dimension~$p$.
\end{corollary}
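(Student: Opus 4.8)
The plan is to reduce the statement immediately to Theorem~\ref{tmreg}: it suffices to exhibit a regular set of (anti)automorphisms of $\Z_v$ of size $p-1$, for then any $(v,k,\lambda)$ difference set in $\Z_v$ extends to dimension $(p-1)+1=p$. Since $\Z_v$ is abelian, automorphisms and antiautomorphisms coincide, and the automorphisms of $\Z_v$ are exactly the multiplication maps $g\mapsto cg$; here $c$ induces an automorphism if and only if $\gcd(c,v)=1$, equivalently $c$ is divisible by no prime divisor of~$v$.

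First I would take $\varphi_x\colon\Z_v\to\Z_v$, $\varphi_x(g)=xg$, for $x=1,\ldots,p-1$. Each $\varphi_x$ is an automorphism: since $p$ is the smallest prime divisor of~$v$ and $1\le x\le p-1<p$, every prime factor of~$x$ is strictly smaller than~$p$ and hence does not divide~$v$, so $\gcd(x,v)=1$. Next I would verify the pairwise differences. For $1\le x<y\le p-1$ the map $\varphi_x-\varphi_y$ is multiplication by $x-y$, and $1\le y-x\le p-2<p$, so the same divisibility argument gives $\gcd(x-y,v)=1$; as multiplication by $-1$ is an automorphism, $\varphi_x-\varphi_y$ is an automorphism irrespective of the sign of $x-y$. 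Hence $R=\{\varphi_1,\ldots,\varphi_{p-1}\}$ is a regular set of automorphisms of $\Z_v$ of size $p-1$, and Theorem~\ref{tmreg} yields the claim.

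There is no genuine obstacle in this argument; the only point requiring care is the observation that precisely the residues $1,\ldots,p-1$, together with all their differences, are guaranteed to be coprime to~$v$ by the minimality of~$p$, and these are exactly $p-1$ values, which is what dimension~$p$ requires. It is worth remarking that this construction does not in general produce more: any additional unit $a$ adjoined to $R$ reduces mod~$p$ to a nonzero residue, hence $a\equiv r\pmod p$ for some $r\in\{1,\ldots,p-1\}$, so $a-r$ is divisible by~$p$ and regularity fails. Thus dimension~$p$ is the natural output of the method, in agreement with the upper bound $n\le v$ of Theorem~\ref{dsbound} in the case when $v=p$ is prime.
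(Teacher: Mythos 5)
Your proposal is correct and matches the paper's proof: both exhibit the multiplication maps $\varphi_1,\ldots,\varphi_{p-1}$ as a regular set of automorphisms of $\Z_v$ (using that every $x$ and every difference $x-y$ with $1\le x<y\le p-1$ is coprime to $v$ by minimality of $p$) and then invoke Theorem~\ref{tmreg}. You simply spell out the coprimality checks in more detail than the paper does.
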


\begin{proof}
Automorphisms of $\Z_v$ are of the form $\varphi_a(g)=ag$, where
$a\in \Z_v\setminus \{0\}$ is relatively prime to~$v$. The set
$\{\varphi_1,\ldots,\varphi_{p-1}\}$ is a regular set of
size~$p-1$. Hence, any difference set in~$\Z_v$ extends to
dimension~$p$ by Theorem~\ref{tmreg}.
\end{proof}

For a non-abelian example, we turn to groups of order~$27$.
There are five such groups, but only two of them
contain nontrivial difference sets~\cite{DP19}. These are the
elementary abelian group $\Z_3\times\Z_3\times\Z_3$
(GAP group ID~5) and a semidirect product $\Z_9\rtimes \Z_3$
(GAP group ID~4). Difference sets in the
former group extend to dimension~$27$ by Theorem~\ref{tmelab}.
A regular set in the latter group is given in the next example,
using multiplicative notation.

\begin{example}
The group of order $27$ with GAP ID $4$ can be presented as
$G=\langle a, b \mid a^9=b^3=1,\, ba=a^4b\rangle$. Let $\varphi_1(g)=g$
be the identity automorphism of~$G$ and $\varphi_2(g)=g^{-1}$ the
corresponding antiautomorphism. The ``difference''
$\varphi_1(g)(\varphi_2(g))^{-1}=g^2$ is an antiautomorphism,
namely the one corresponding to $\varphi\in \Aut(G)$, $\varphi(a)=a^7$,
$\varphi(b)=b$. Thus, $\{\varphi_1,\varphi_2\}$ is a regular set
of (anti)automorphisms of~$G$.
\end{example}

By Theorem~\ref{tmreg}, all $(27,13,6)$ and $(27,14,7)$ difference
sets in $G=\langle a, b \mid a^9=b^3=1,\, ba=a^4b\rangle$ extend
to dimension~$3$.

\section{Final observations}\label{sec5}

A curious fact visible in Table~\ref{tab4} is equality of the numbers
for some complementary parameters. Ordinary ($2$-dimensional) difference
sets and designs are rarely discussed for both sets of complementary
parameters, because complementation is a bijection. This is also true
for higher-dimensional $\C$-cubes, but not for $\P$-cubes as we have
seen in Proposition~\ref{nocompl}. However, the numbers in Table~\ref{tab4}
coincide for complementary Paley-Hadamard parameters $\P^n(4m-1,2m-1,m-1)$
and $\P^n(4m-1,2m,m)$, when $4m-1=p^s$ is a prime power and $G=(\Z_p)^s$ is
an elementary abelian group. For small parameters, this can be explained
by the following bijection.

Every $(4m-1,2m-1,m-1)$ difference set $D\subseteq G$ can be uniquely extended
to a $(4m-1,2m,m)$ difference set $D\cup\{a\}$ by adding the element
$a=-2\sum_{d\in D} d$. We have checked that this is a bijection between
$(4m-1,2m-1,m-1)$ and $(4m-1,2m,m)$ difference sets in the
groups $\Z_7$, $\Z_{11}$, $\Z_{19}$, and $\Z_{23}$ from Table~\ref{tab4},
and also in $G=(\Z_3)^3$. We don't have a proof for arbitrary
elementary abelian groups. However, if this is indeed a bijection between
ordinary difference sets, then it is easy to establish a $1$-to-$1$
correspondence between $n$-dimensional difference sets. Given an
$n$-dimensional $(4m-1,2m-1,m-1)$ difference set $D\subseteq G^n$,
one simply adds the $n$-tuple $\left(-2\sum_{d\in D} d_1,\ldots,-2\sum_{d\in D} d_n\right)$
to obtain an $n$-dimensional $(4m-1,2m,m)$ difference set.

If a $(4m-1,2m-1,m-1)$ difference set $D\subseteq G$ extends
to a $(4m-1,2m,m)$ difference set $D\cup\{a\}$, then the complement
$D'=G\setminus (D\cup\{a\})$ is also a $(4m-1,2m-1,m-1)$ difference set
in~$G$. Then $(D-a)\cup (D'-a) \cup \{0\}$ is a tiling of~$G$ by two
difference sets, and $D-a$ is a skew Hadamard difference set by
\cite[Theorem~8]{CKZ15}. A long-standing conjecture was that the classical
Paley difference sets are the only skew Hadamard difference sets in elementary
abelian groups, but it has been refuted in~\cite{DY06, DWX07} for
groups of orders $3^m$, $m\ge 5$ odd. It might also be the case that
our bijection holds only for elementary abelian groups of small
orders, but we don't have a counterexample.

Yet another apparent symmetry in Table~\ref{tab4} is equality of
the numbers for ``complementary dimensions'' $n$ and $v-n$. Equality
holds for some small parameters $(v,k,\lambda)$ when the upper bound
of Theorem~\ref{dsbound} is reached: $(7,3,1)$, $(7,4,2)$, $(11,5,2)$,
$(11,6,3)$, and $(13,4,1)$. It breaks for the complementary parameters
$(13,9,6)$, while for $(19,9,4)$, $(19,10,5)$, $(23,11,5)$, and $(23,12,6)$
equality holds except for dimension $n=3$. For $(31,6,1)$, it breaks for
$n=3$, $4$, $5$, $6$ and holds for the remaining dimensions. We don't have
an explanation for this phenomenon.

\end{document}